\def\dx{\,{\rm dx}}
\newtheorem{theorem}{Theorem}[section]
\newtheorem{remark}[theorem]{Remark}
\newtheorem{proposition}[theorem]{Proposition}
\newtheorem{lemma}[theorem]{Lemma}
\newcounter{mnote}
\let\oldmarginpar\marginpar
\renewcommand\marginpar[1]{\-\oldmarginpar[\raggedleft\footnotesize #1]
  {\raggedright\footnotesize #1}}
\numberwithin{equation}{section}
\setlist[enumerate]{nosep}
\def\rn{\boldsymbol{\mathbb{R}^{\boldsymbol{\mathsf{n}}}}}
\def\uv{\undertilde{v}}
\def\uV{\undertilde{V}}
\def\utau{\undertilde{\tau}}
\def\dv{{\rm div}}
\def\od{\mathbf{d}}
\def\oT{\mathbf{T}}
\def\odelta{\boldsymbol{\delta}}
\def\okappa{\boldsymbol{\kappa}}
\def\R{\mathcal{R}}
\def\N{\mathcal{N}}
\def\xD{\boldsymbol{\mathbf{D}}}
\def\xv{\boldsymbol{\mathbf{v}}}
\def\fomega{\boldsymbol{\omega}}
\def\fmu{\boldsymbol{\mu}}
\def\feta{\boldsymbol{\eta}}
\def\fpsi{\boldsymbol{\psi}}
\def\fW{\boldsymbol{W}}
\def\ff{\boldsymbol{f}}
\def\ixalpha{\boldsymbol{\alpha}} 
\def\ixbeta{\boldsymbol{\beta}}
\def\ixtau{\boldsymbol{\tau}}
\def\ixsigma{\boldsymbol{\sigma}}
\def\q{\mathcal{Q}}
\begin{document}


\title{Finite element spaces by Whitney $k$-forms on cubical meshes}


\author{Shuo Zhang}
\address{LSEC, Institute of Computational Mathematics and Scientific/Engineering Computing, Academy of Mathematics and System Sciences, Chinese Academy of Sciences, Beijing 100190; University of Chinese Academy of Sciences, Beijing, 100049; People's Republic of China}
\email{szhang@lsec.cc.ac.cn}

\thanks{The research is partially supported by NSFC (12271512, 11871465).}

\begin{abstract}

Finite element spaces by Whitney $k$-forms on cubical meshes in $\rn$ are presented. Based on the spaces, compatible discretizations to $H\Lambda^k$ problems are provided, and discrete de Rham complexes and commutative diagrams are constructed.
\end{abstract}

\subjclass[2010]{Primary 47A05, 47A65, 47N40, 65J10, 65N30} 

\keywords{cubical mesh, Whitney form, compatible finite element space, commutative diagram}

\maketitle


%
%
%
\section{Introduction}

As main objectives of finite element exterior calculus, there have been many kinds of finite element spaces for differential $k$-forms, for which we refer to, e.g., \cite{Arnold.D;Falk.R;Winther.R2010bams,Arnold.D;Falk.R;Winther.R2006acta,Arnold.D2018feec}. Most of these existing spaces are on simplicial meshes, two representative families being the $\mathcal{P}_r\Lambda^k$ family and the $\mathcal{P}^-_r\Lambda^k$, or trimmed $\mathcal{P}_r\Lambda^k$, family, and among these finite elements, those by Whitney forms are of the minimal shape function spaces. Meanwhile, finite element spaces based on cubical meshes of n-dimensional boxes (products of intervals) are also popular. Families of finite element spaces which are in many ways parallel to the $\mathcal{P}_r\Lambda^k$ and the $\mathcal{P}^-_r\Lambda^k$ families, namely the tensor product family, $\mathcal{Q}^-_r\Lambda^k$, and the trimmed and non-trimmed serendipity families, $\mathcal{S}^-_r\Lambda^k$ and $\mathcal{S}_r\Lambda^k$, have been established. We refer to, e.g., \cite{Arnold.D;Boffi.D;Bonizzoni.F2015finite,Arnold.D;Awanou.G2014,Gillette.A;Hu.K;Zhang.S2020,Lohi.J2023new,Gillette.A;Kloefkorn.T2019trimmed,Cockburn.B;Qiu.W2015,Christiansen.S;Gillette.A2016} for these and related works. We note that, Whitney forms are of the minimal degree in theory to be sufficient for local approximation in $H\Lambda^k$ norms to functions, but among existing finite elements on cubical meshes, shape functions beyond Whitney forms always have to be used. It is natural to investigate if compatible finite element spaces for $H\Lambda^k$ can be constructed by no more than piecewise Whitney forms on cubical meshes. In this paper, we present a confirming answer to this question by constructing a sequence of finite element function spaces and further discrete de Rham complexes thereon. 
~\\

A first attempt to this question used to be made on $H\Lambda^0$ problems, where the task is to construct finite element spaces on cubical meshes by piecewise $\mathcal{P}_1$ polynomial shape functions, rather than $\mathcal{Q}_1$ polynomials which are usually used. An innovative answer to this problem is given by Park-Sheen \cite{Park.C;Sheen.D2003}, where an integral continuity across the interfaces between cells are used to formulate the nonconforming global space of finite element functions. The finite element spaces given in \cite{Park.C;Sheen.D2003}, denoted by $V^{\rm PS}_h$ below, can be naturally viewed an extension of the lowest-degree Crouzeix-Raviart element to cubical meshes. Notably, $V^{\rm PS}_h$ does not coincide with a ``finite element" in Ciarlet's sense \cite{Ciarlet.P1978book}, though, its capacity still admits compatible discretization to $H^1$-elliptic problems. Later, Hu-Shi\cite{Hu.J;Shi.Z2005} discovers that, the space $V^{\rm PS}_h$ contains a subspace by interpolating the conforming bilinear element space to the nonconforming rotated $Q_1$ element space. This provides a remarkable insight to the approximation capacity of $V^{\rm PS}_h$. So far, finite element space $V^{\rm PS}_h$ has been widely applied and studied associated with $H\Lambda^0$ problems. We note that, in spite of this early example, finite element spaces by Whitney forms for $H\Lambda^k$ with general $k$, indeed any $k\geqslant 1$, have not been known to us on cubical meshes.

Recently, a methodology to extend the Crouzeix-Raviart element space on simplicial grids from $H\Lambda^0$ (namely $H^1$) to general $H\Lambda^k$ is presented \cite{Zhang.S2022padao}. The methodology is based on a reinterpretation of the Crouzeix-Raviart element space that, with $V^{\rm CR}_h$ and $\uV{}^{\rm RT}_h$ being respectively the lowest degree Crouziex-Raviart and Raviart-Thomas element spaces, on a simplicial triangulation $\mathcal{T}_h$,
\begin{equation}\label{eq:propcr}
V^{\rm CR}_h=\left\{v_h\in L^2(\Omega):v_h|_T\in P_1(T),\ \forall\,T\in\mathcal{T}_h,\ \mbox{and}\  \sum_{T\in\mathcal{T}_h}\int_T\nabla v_h\utau{}_h+\int_Tv_h\dv\utau{}_h=0\ \forall\,\utau{}_h\in \uV{}^{\rm RT}_{h0}\right\}.
\end{equation}
Namely, the far well known discrete Green's formula
\begin{equation}
\sum_{T\in\mathcal{T}_h}\int_T\nabla v_h\utau{}_h+\int_Tv_h\dv\utau{}_h=0,\ \ \mbox{for}\ v_h\in\ V^{\rm CR}_h\ \mbox{and}\ \utau{}_h\in \uV{}^{\rm RT}_{h0}
\end{equation}
is not only a property of $V^{\rm CR}_h$ and $\uv{}^{\rm RT}_{h0}$, but also it can work as the definition of $V^{\rm CR}_h$ given $\uV^{\rm RT}_{h0}$. Specifically, $V^{\rm CR}_h$ for $H^1(\Omega)$ can be viewed as a discrete adjoint space of $\uV^{\rm RT}_{h0}$ for $H_0(\dv,\Omega)$. This hints a general methodology to define nonconforming finite element spaces by Whitney forms. Let $\fW^*_{h0}\Lambda^k$ denote the standard conforming Whitney finite element space associated with $H^*_0\Lambda^k$. The nonconforming Whitney finite element space associated with $H\Lambda^k$ is defined in \cite{Zhang.S2022padao} as 
\begin{equation*}
\mathbf{W}^{\rm abc}_h\Lambda^k:=\left\{\fomega_h\in  \mathcal{P}^-_1\Lambda^k(\mathcal{T}_h): \sum_{T\in\mathcal{T}_h}\langle\fomega_h,\odelta_{k+1}\feta_h\rangle_{L^2\Lambda^k(T)}-\langle\od^k\fomega_h,\feta_h\rangle_{L^2\Lambda^{k+1}(T)}=0,\ \forall\,\feta_h\in\fW^*_{h0}\Lambda^{k+1}\right\},
\end{equation*}
with $\mathcal{P}^-_1\Lambda^k(\mathcal{T}_h)$ being the space of piecewise Whitney forms on $\mathcal{T}_h$. The uniform discrete Poincar\'e inequality, discrete Helmholtz decomposition, discrete Hodge decomposition, discrete Poincar\'e-Leftschitz duality, discrete de Rham complexes and commutative diagrams are established in \cite{Zhang.S2022padao} with locally defined discrete operators by inheriting adjoint relationships. This initializes a new way to impose global continuity condition onto local functions different from existing finite element methods and hints several other low(est)-degree finite element spaces\cite{Zhang.S2022pfemdde,Zhang.S2022pfemhl,Zhang.S2022ldHR2D}. All these spaces are not only theoretical constructions but also implementable for finite element methods. 
~\\

In this paper, applying the approach of \cite{Zhang.S2022padao}, we construct a unified family of finite element spaces by Whitney forms on cubical meshes for $H\Lambda^k$ in $\rn$. Specifically, given a cubical mesh $\mathcal{G}_h$, let, with precise meaning given later, $\mathcal{P}^-_1\Lambda^k(\mathcal{G}_h)$ be the space of piecewise Whitney forms on $\mathcal{G}_h$ and $V^{\q,*}_{h0}\Lambda^{k+1}$ be the standard conforming finite element space for $H^*_0\Lambda^{k+1}$ by $\star(\q^-_1\Lambda^{k+1})$. We define the Whitney form space for $H\Lambda^k$ by
\begin{equation*}
\fW^{\rm def}_h\Lambda^k:=\left\{\fomega_h\in\mathcal{P}^-_1\Lambda^k(\mathcal{G}_h):\sum_{K\in\mathcal{G}_h}\langle\od^k\fomega_h,\fmu_h\rangle_{L^2\Lambda^{k+1}(K)}-\langle\fomega_h,\odelta_{k+1}\fmu_h\rangle_{L^2\Lambda^k(K)}=0,\ \forall\,\fmu_h\in V^{\q,*}_{h0}\Lambda^{k+1}\right\}.
\end{equation*}
They are the extension of the Crouzeix-Raviart's element on cubical meshes, while Park-Sheen\cite{Park.C;Sheen.D2003}'s space is included in the family as one for $H\Lambda^0$; it is easy to see $V^{\rm PS}_h$ can be interpreted the same way as \eqref{eq:propcr}. Based on the spaces, discrete de Rham complexes and compatible finite element discretizations for $H\Lambda^k$ elliptic problems can be constructed. 

Again, the space $\fW^{\rm def}_h\Lambda^k$ does not coincide with a ``finite element" in Ciarlet's sense, and therefore new techniques are needed to construct the approximation. Technically, the main ingredient is to use the interpolation $\mathbb{I}^{\od^k}_K:H\Lambda^k(K)\to \mathcal{P}^-_1\Lambda^k(K)$ defined in \cite{Zhang.S2022padao} which possess the optimal approximation in piecewise $H\Lambda^k$ norm, and prove that the interpolator maps the functions of $V^{\q}_h\Lambda^k$ into $\fW^{\rm def}_h\Lambda^k$, and the approximation of $\fW^{\rm def}_h\Lambda^k$ can thus be proved. This routine is similar to Hu-Shi\cite{Hu.J;Shi.Z2005}'s argument, but carried out in a different way; actually, restricted to the $k=0$ case in $\mathbb{R}^2$ which was studied in Hu-Shi\cite{Hu.J;Shi.Z2005}, $\mathbb{I}^{\od^k}_K$ is different from the interpolator used there. It actually confirms that $\fW^{\rm def}_h\Lambda^k$ is implementable. The contents of this present paper are self-contained. 
~\\

Finally we remark that this paper is relevant to the question raised in Christiansen-Gillette\cite{Christiansen.S;Gillette.A2016} how to construct minimal spaces of differential forms equipped with commuting interpolators and contain prescribed functions. The minimality of several families of elements were proved in \cite{Christiansen.S;Gillette.A2016}. The major difference between the construction of this present paper and the elements therein lies in that, the condition of continuity of $\fW^{\rm def}_h\Lambda^k$ across the interfaces among cells are not necessarily local degrees of freedom of the Whitney forms $\mathcal{P}^-_1\Lambda^k$, and it is worthy of mentioning that the notion of nonconforming finite element space is necessary. We further refer to, e.g., \cite{Fortin.M;Soulie.M1983,Zhang.S2020IMA,Zhang.S2021SCM,Liu.W;Zhang.S2022jsc} for some existing nonconforming finite element spaces not of Ciarlet's type. 
~\\

The remaining of the paper is organized as follows. Section \ref{sec:pre} collects some useful preliminaries and notations. In Section \ref{sec:q-revisited}, we revisit the space $\q^-_1\Lambda^k$ and present some structural properties. In Section \ref{sec:whitrec}, we construct the finite element space by Whitney forms on cubical meshes, present the compatible discretization to $H\Lambda^k$-elliptic problems and the discrete de Rham complexes and commutative diagrams. Finally, some concluding remarks are given in Section \ref{sec:conc}.

\section{Preliminaries}
\label{sec:pre}

\subsection{$L^2$ theory of exterior calculus}
 
Following \cite{Arnold.D2018feec}, we denote by $\Lambda^k(\Xi)$ the space of differential $k$-forms on an $n$-dimensional domain $\Xi$, and $L^2\Lambda^k(\Xi)$ consists of differential $k$-forms with coefficients in $L^2(\Xi)$ component by component. $L^2\Lambda^k(\Xi)$ is a Hilbert space with inner product $\langle\cdot,\cdot\rangle_{L^2\Lambda^k(\Xi)}$. The exterior differential operator $\od^k:\Lambda^k(\Xi)\to \Lambda^{k+1}(\Xi)$ is an unbounded operator from $L^2\Lambda^k(\Xi)$ to $L^2\Lambda^{k+1}(\Xi)$. Denote, 
$$
H\Lambda^k(\Xi):=\left\{\fomega\in L^2\Lambda^k(\Xi):\od^k\fomega\in L^2\Lambda^{k+1}(\Xi)\right\},\ \ \ 0\leqslant k\leqslant n-1,
$$
and by $H_0\Lambda^k(\Xi)$ the closure of $\mathcal{C}_0^\infty\Lambda^k(\Xi)$ in $H\Lambda^k(\Xi)$.

The Hodge star operator $\star$ maps $L^2\Lambda^k(\Xi)$ isomorphically to $L^2\Lambda^{n-k}(\Xi)$ for each $0\leqslant k\leqslant n$. The \emph{codifferential operator} $\odelta_k\fmu=(-1)^{kn}\star\od^{n-k}\star\fmu$ is unbounded from $L^2\Lambda^k(\Xi)$ to $L^2\Lambda^{k-1}(\Xi)$.  Denote 
$$
H^*\Lambda^k(\Xi):=\left\{\fmu\in L^2\Lambda^k(\Xi):\odelta_k\fmu\in L^2\Lambda^{k-1}(\Xi)\right\},\ \ \ 1\leqslant k\leqslant n,
$$
and $H^*_0\Lambda^k(\Xi)$ the closure of $\mathcal{C}_0^\infty\Lambda^k(\Xi)$ in $H^*\Lambda^k(\Xi)$. Then $H^*\Lambda^k(\Xi)=\star H\Lambda^{n-k}(\Xi)$, and $H^*_0\Lambda^k(\Xi)=\star H_0\Lambda^{n-k}(\Xi)$. Further $\mathcal{N}(\od^k,H\Lambda^k)=\star \mathcal{N}(\odelta_{n-k},H^*\Lambda^{n-k})$, and $\mathcal{R}(\od^{k-1},H\Lambda^{k-1})=\star\mathcal{R}(\odelta_{n-k+1},H^*\Lambda^{n-k+1})$. Here and in the sequel of the paper, we use $\N$ and $\R$ to denote the null space and the range of certain operators. Namely, for example, $\N(\oT,\xD)=\left\{\xv\in\xD:\oT\xv=0\right\}$, and $\R(\oT,\xD)=\left\{\oT\xv:\xv\in\xD\right\}.$

\begin{lemma}\cite{Arnold.D2018feec}\label{lem:a-book-6.5}
The adjoint of 
\begin{enumerate}
\item $(\od^k,H\Lambda^k(\Xi))$ is $(\odelta_{k+1},H^*_0\Lambda^{k+1}(\Xi))$;
\item $(\od^k,H_0\Lambda^k(\Xi))$ is $(\odelta_{k+1},H^*\Lambda^{k+1}(\Xi))$;
\item $(\odelta_{k+1},H^*\Lambda^{k+1}(\Xi))$ is $(\od^k,H_0\Lambda^k(\Xi))$;
\item $(\odelta_{k+1},H^*_0\Lambda^{k+1}(\Xi))$ is $(\od^k,{H}\Lambda^k(\Xi))$.
\end{enumerate}
\end{lemma}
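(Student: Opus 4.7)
The plan is to prove all four statements by combining the integration by parts (Stokes) identity for differential forms with density arguments and the characterizations of the $H_0$-spaces by vanishing traces. The workhorse is the formula
\begin{equation*}
\langle \od^k\fomega,\fmu\rangle_{L^2\Lambda^{k+1}(\Xi)} - \langle \fomega,\odelta_{k+1}\fmu\rangle_{L^2\Lambda^k(\Xi)} \;=\; \int_{\partial\Xi}\text{tr}\,\fomega\wedge\text{tr}(\star\fmu),
\end{equation*}
which holds classically for smooth forms and extends by continuity once the tangential/normal traces are interpreted in the appropriate dual trace spaces on $\partial\Xi$. I will first derive (1) and (2) directly from this identity, and then obtain (3) and (4) by the Hodge star duality $H^*\Lambda^k(\Xi)=\star H\Lambda^{n-k}(\Xi)$, $H^*_0\Lambda^k(\Xi)=\star H_0\Lambda^{n-k}(\Xi)$ together with $\odelta_k=(-1)^{kn}\star\od^{n-k}\star$.

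For statement (1), suppose $\fmu\in L^2\Lambda^{k+1}(\Xi)$ and $\feta\in L^2\Lambda^k(\Xi)$ satisfy $\langle\od^k\fomega,\fmu\rangle=\langle\fomega,\feta\rangle$ for every $\fomega\in H\Lambda^k(\Xi)$. Testing first against $\fomega\in\mathcal{C}_0^\infty\Lambda^k(\Xi)$ identifies $\feta$ with the distributional codifferential of $\fmu$, so $\fmu\in H^*\Lambda^{k+1}(\Xi)$ and $\feta=\odelta_{k+1}\fmu$. With this regularity in hand, the integration by parts identity above becomes legitimate for all $\fomega\in H\Lambda^k(\Xi)$, and the hypothesis forces $\int_{\partial\Xi}\text{tr}\,\fomega\wedge\text{tr}(\star\fmu)=0$ for every admissible $\fomega$. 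Because the trace operator from $H\Lambda^k(\Xi)$ has dense image into the natural trace space that pairs with $\text{tr}(\star\fmu)$, this forces $\text{tr}(\star\fmu)=0$, i.e.\ $\star\fmu\in H_0\Lambda^{n-k-1}(\Xi)$, equivalently $\fmu\in H^*_0\Lambda^{k+1}(\Xi)$. Statement (2) is obtained in the same way, but now $\fomega$ ranges over $H_0\Lambda^k(\Xi)$, so $\text{tr}\,\fomega=0$ annihilates the boundary term automatically and no trace condition on $\fmu$ arises; thus only $\fmu\in H^*\Lambda^{k+1}(\Xi)$ is required.

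Statements (3) and (4) follow by two independent routes, either of which I would use. The symmetric route applies the Hodge star: if $\fomega\mapsto\odelta_{k+1}\fomega$ on $H^*\Lambda^{k+1}(\Xi)$ is rewritten through $\star$ as the exterior derivative $\od^{n-k-1}$ acting on $\star\fomega\in H\Lambda^{n-k-1}(\Xi)$, then statement (3) is just statement (1) applied to $\od^{n-k-1}$ and read back through $\star$; similarly statement (4) comes from (2). The abstract route uses that each operator in (1)--(2) is densely defined and closed (closedness is immediate from the definition of $H\Lambda^k$ and $H^*\Lambda^k$ via the graph norm), so that the adjoint is automatically closed and the double adjoint coincides with the original operator, which yields (3)--(4) for free from (1)--(2).

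The main obstacle will be the trace-density step in the proof of (1): namely, that the boundary pairing $\int_{\partial\Xi}\text{tr}\,\fomega\wedge\text{tr}(\star\fmu)$ vanishing for all $\fomega\in H\Lambda^k(\Xi)$ actually implies $\text{tr}(\star\fmu)=0$. On Lipschitz $\Xi$ this is part of the standard trace theory for $H\Lambda^k$ spaces (generalizing the $H(\dv)$ and $H(\curl)$ trace theorems), and we would invoke that theory rather than develop it. Given this ingredient, the remaining computations are routine: a distributional identification of $\odelta_{k+1}\fmu$ in the interior, the integration by parts formula, and the Hodge star translation that moves between (1)--(2) and (3)--(4).
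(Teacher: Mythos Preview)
The paper does not prove this lemma at all; it is stated with a bare citation to Arnold's \emph{Finite Element Exterior Calculus} (2018) and no argument is given. Your outline is essentially the standard proof one finds in that reference (and in the earlier Arnold--Falk--Winther surveys): integration by parts to identify the action of the adjoint, a distributional test against $\mathcal{C}_0^\infty$ forms to obtain $\odelta_{k+1}\fmu\in L^2$, and then the trace argument on $\partial\Xi$ to pin down the boundary condition, with (3)--(4) obtained either by Hodge-star conjugation or by the closed-operator double-adjoint argument. So there is nothing to compare---your approach is the expected one.

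Two minor remarks on completeness. First, you only spell out one inclusion in (1) (domain of adjoint $\subset H^*_0\Lambda^{k+1}$); the reverse inclusion ($H^*_0\Lambda^{k+1}\subset$ domain of adjoint) should also be recorded, and follows immediately from the density of $\mathcal{C}_0^\infty\Lambda^{k+1}$ in $H^*_0\Lambda^{k+1}$ together with the boundary-free integration by parts for compactly supported $\fmu$. Second, you correctly flag the trace-density step as the only nontrivial ingredient; on a Lipschitz domain this is exactly the content of the $H\Lambda^k$ trace theorems, and citing them is the right move rather than reproving them.
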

In the sequel, we will drop the domain $``\Xi"$ when there is no ambiguity brought in.

\subsection{Basics of Whitney $k$-forms}

For an integer $n$, denote the set of $k$-indices, $k\leqslant n$, as
$$
\mathbb{IX}_{k,n}:=\left\{\ixalpha=(\ixalpha_1,\dots,\ixalpha_k)\in\mathbb{N}^k:1\leqslant \ixalpha_1<\ixalpha_2<\dots<\ixalpha_k\leqslant n,\ \mathbb{N}\ \mbox{the\ set\ of\ integers}\right\}. 
$$
For $\ixalpha\in\mathbb{IX}_{k,n}$, we denote its cardinal number $|\ixalpha|=k$. We make the convention that, if $\ixalpha$ is empty, $|\ixalpha|=0$. The Koszul operator $\okappa$ is such that
$$
\okappa(\dx^{\ixalpha_1}\wedge\dots\wedge\dx^{\ixalpha_k})=\sum_{j=1}^k(-1)^{j+1}x_{\ixalpha_j}\dx^{\ixalpha_1}\wedge\dots\wedge\dx^{\ixalpha_{j-1}}\wedge\dx^{\ixalpha_{j+1}}\wedge\dots\wedge\dx^{\ixalpha_k}.
$$

Following \cite{Arnold.D;Falk.R;Winther.R2006acta}, denote by $\mathcal{P}_r\Lambda^k$ the space of differential $k$-forms with polynomial coefficients of degree of at most $r$. The space of Whitney forms, the lowest-degree trimmed polynomial $k$-forms, associated with the operator $\od^k$ can be denoted by (\cite{Arnold.D;Falk.R;Winther.R2006acta,Arnold.D;Falk.R;Winther.R2010bams,Arnold.D2018feec})
$$
\mathcal{P}^-_1\Lambda^k=\mathcal{P}_0\Lambda^k+\okappa(\mathcal{P}_0\Lambda^{k+1}).
$$
We denote the space of Whitney forms associated with the operator $\odelta_k$ by
\begin{equation}
\mathcal{P}^{*,-}_1\Lambda^k:=\star(\mathcal{P}^-_1\Lambda^{n-k})=\mathcal{P}_0\Lambda^k+\star\okappa\star(\mathcal{P}_0\Lambda^{k-1}).
\end{equation}
It is known that $\mathcal{P}^-_1\Lambda^0=\mathcal{P}_1\Lambda^0$ and $\mathcal{P}^-_1\Lambda^n=\mathcal{P}_0\Lambda^n$. Then $\mathcal{P}^{*,-}_1\Lambda^0=\mathcal{P}_0\Lambda^0$ and $\mathcal{P}^{*,-}_1\Lambda^n=\mathcal{P}_1\Lambda^n$.
The evident connection below is useful:
\begin{equation}\label{eq:localcouple}
\mathcal{R}(\od^k,\mathcal{P}^-_1\Lambda^k)=\mathcal{P}_0\Lambda^{k+1}=\N(\odelta_{k+1},\mathcal{P}^{*,-}_1\Lambda^{k+1}),\ \mbox{and}\ \ \mathcal{N}(\od^k,\mathcal{P}^-_1\Lambda^k)=\mathcal{P}_0\Lambda^k=\mathcal{R}(\odelta_{k+1},\mathcal{P}^{*,-}\Lambda^{k+1}).
\end{equation}

On a cubic $K$, following \cite{Zhang.S2022padao}, define the adjoint projection
\begin{equation}
\mathbb{I}^{\od^k}_K:H\Lambda^k(K)\to \mathcal{P}^-_1\Lambda^k(K)
\end{equation}
such that, for $\fmu\in \mathcal{P}^{*,-}_1\Lambda^k$,
\begin{equation}\label{eq:pwap}
\langle \od^k\mathbb{I}^{\od^k}_K\fomega, \fmu\rangle_{L^2\Lambda^{k+1}(K)}-\langle \mathbb{I}^{\od^k}_K\fomega,\odelta_{k+1} \fmu\rangle_{L^2\Lambda^k(K)}
=
\langle \od^k\fomega, \fmu\rangle_{L^2\Lambda^{k+1}(K)}-\langle \fomega,\odelta_{k+1} \fmu\rangle_{L^2\Lambda^k(K)}. 
\end{equation}
By \eqref{eq:localcouple}, $\mathbb{I}^{\od^k}_K$ is well-defined. 
Given a cubical mesh $\mathcal{G}_h$ on a domain $\Omega\subset \rn$, denote
\begin{equation}
\displaystyle\mathcal{P}^-_1\Lambda^k(\mathcal{G}_h):=\prod_{K\in\mathcal{G}_h}\mathcal{P}^-_1\Lambda^k(K), \ \mbox{and}\ \ \displaystyle\mathcal{P}^{*,-}_1\Lambda^k(\mathcal{G}_h):=\prod_{K\in\mathcal{G}_h}\mathcal{P}^{*,-}_1\Lambda^k(K).
\end{equation}

Define a global interpolator
$$
\mathbb{I}^{\od^k}_h:\prod_{T\in\mathcal{G}_h} H\Lambda^k(K)\to \mathcal{P}^-_1\Lambda^k(\mathcal{G}_h),\ \ \mbox{by}\ \ (\mathbb{I}^{\od^k}_h\fomega)|_K=\mathbb{I}^{\od^k}_K(\fomega|_K),\ \forall\,K\in\mathcal{G}_h.
$$
When $k=n$, $\mathbb{I}^{\od^n}_K$ is the $L^2$ projection to $\mathcal{P}_0\Lambda^n$ on $K$, and $\mathbb{I}^{\od^n}_h$ is the $L^2$ projection to $\mathcal{P}_0\Lambda^n(\mathcal{G}_h)$. 

Here and in the sequel, we use the subscript $``\cdot_h"$ to denote the dependence on the mesh. In particular, an operator with the subscript $``\cdot_h"$ indicates that the operation is performed cell by cell. For example, $(\od^k_h\fomega_h)|_K=\od^k(\fomega_h|_K)$ for $K\in\mathcal{G}_h$. We refer to \cite{Zhang.S2022padao} for the proof of the lemma below, which is quite straightforward by noting \eqref{eq:localcouple}. In \cite{Zhang.S2022padao}, the interpolator is originally defined on simplices; the virtue is the same on cubics.

\begin{lemma}\label{lem:globaloa}\cite{Zhang.S2022padao}
For $\displaystyle\fomega\in \prod_{K\in\mathcal{G}_h} H\Lambda^k(K)$, with $C_{k,n}$ depending on the shape regularity of $\mathcal{G}_h$, 
$$
\|\fomega-\mathbb{I}^{\rm \od^k}_h\fomega\|_{\od^k_h}\leqslant C_{k,n}\inf_{\feta_h\in \mathcal{P}^-_1\Lambda^k(\mathcal{G}_h)}\|\fomega-\feta_h\|_{\od^k_h}. 
$$ 
\end{lemma}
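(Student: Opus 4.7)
The plan is to reduce the global bound to a cellwise one and prove the cellwise estimate by showing that $\mathbb{I}^{\od^k}_K$ is a bounded projection onto $\mathcal{P}^-_1\Lambda^k(K)$.

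First, $\mathbb{I}^{\od^k}_K$ is a projection: plugging $\fomega=\feta\in\mathcal{P}^-_1\Lambda^k(K)$ into \eqref{eq:pwap} shows that $\feta$ itself satisfies the same linear system as $\mathbb{I}^{\od^k}_K\feta$, and well-posedness (inherited from \eqref{eq:localcouple}: testing with $\fmu\in\mathcal{P}_0\Lambda^{k+1}=\N(\odelta_{k+1},\mathcal{P}^{*,-}_1\Lambda^{k+1})$ forces $\od^k$ of the unknown to vanish, after which testing with $\fmu\in\mathcal{P}^{*,-}_1\Lambda^{k+1}$ whose codifferentials sweep out $\mathcal{P}_0\Lambda^k=\R(\odelta_{k+1},\mathcal{P}^{*,-}_1\Lambda^{k+1})$ kills the unknown itself) forces $\mathbb{I}^{\od^k}_K\feta=\feta$. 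Consequently, for every $\feta\in\mathcal{P}^-_1\Lambda^k(K)$,
\[
\fomega-\mathbb{I}^{\od^k}_K\fomega=(\fomega-\feta)-\mathbb{I}^{\od^k}_K(\fomega-\feta),
\]
so the cellwise best-approximation bound reduces to a uniform $H\Lambda^k$-boundedness of $\mathbb{I}^{\od^k}_K$.

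Second, I would split the graph norm. Testing \eqref{eq:pwap} against $\fmu\in\mathcal{P}_0\Lambda^{k+1}$ identifies $\od^k\mathbb{I}^{\od^k}_K\fomega$ with the $L^2$ projection of $\od^k\fomega$ onto $\mathcal{P}_0\Lambda^{k+1}(K)$, giving $\|\od^k(\fomega-\mathbb{I}^{\od^k}_K\fomega)\|_{L^2}\leqslant\|\od^k(\fomega-\feta)\|_{L^2}$ for any $\feta\in\mathcal{P}^-_1\Lambda^k(K)$, with no constant. For the $L^2$ part I would pass to a reference cube $\hat K$ via the affine dilation $\phi(\hat x)=h_K\hat x$: using $\phi^*\od^k=\hat{\od}^k\phi^*$, the dilation law $\phi^*\odelta_{k+1}=h_K^{-2}\hat\odelta_{k+1}\phi^*$, and the scaling $\langle\cdot,\cdot\rangle_{L^2\Lambda^p(K)}=h_K^{n-2p}\langle\widehat{\cdot},\widehat{\cdot}\rangle_{L^2\Lambda^p(\hat K)}$, every term of \eqref{eq:pwap} picks up the common factor $h_K^{n-2k-2}$ which cancels, so $\widehat{\mathbb{I}^{\od^k}_K\fomega}=\mathbb{I}^{\od^k}_{\hat K}\hat\fomega$. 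On the reference cube the target space is finite-dimensional and the operator is therefore bounded on $H\Lambda^k(\hat K)$; scaling back yields the required cellwise $L^2$ bound.

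Squaring the cellwise estimates, summing over $K\in\mathcal{G}_h$, and taking the infimum over $\feta_h\in\mathcal{P}^-_1\Lambda^k(\mathcal{G}_h)$ then delivers the lemma. The step I expect to be most delicate is the scaling argument: since $\odelta$, unlike $\od$, does not commute with dilations, one must carefully verify that the extra $h_K^{-2}$ appearing in $\phi^*\odelta_{k+1}$ combines with the differing scalings of the $\Lambda^k$ and $\Lambda^{k+1}$ inner products so that both sides of \eqref{eq:pwap} rescale by the same power of $h_K$, leaving a genuinely pullback-invariant Galerkin problem on $\hat K$.
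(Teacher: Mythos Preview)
Your proposal is correct and in line with the paper's treatment: the paper does not prove this lemma but refers to \cite{Zhang.S2022padao} with the remark that it is ``quite straightforward by noting \eqref{eq:localcouple},'' and your argument is precisely the standard unpacking of that hint --- \eqref{eq:localcouple} furnishes well-posedness and the projection property of $\mathbb{I}^{\od^k}_K$, after which the best-approximation bound reduces to uniform $H\Lambda^k$-boundedness, which you verify by passing to a reference element.

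The only point worth tightening is the scaling step. Your computation uses an isotropic dilation $\phi(\hat x)=h_K\hat x$, which maps $\hat K$ onto a cube with equal side lengths; but the cells of a cubical mesh are general boxes (products of intervals), so the honest map is anisotropic, $\phi_i(\hat x)=h_{K,i}\hat x_i$. Under such a map the Hodge star and hence $\odelta$ pick up factors depending on the multi-index, and the clean common factor $h_K^{n-2k-2}$ in \eqref{eq:pwap} is replaced by ratios of the $h_{K,i}$. The fix is routine --- either argue by compactness over the bounded set of aspect ratios, or observe directly that the relevant norm equivalences on the finite-dimensional spaces $\mathcal{P}^-_1\Lambda^k(K)$ and $\mathcal{P}^{*,-}_1\Lambda^{k+1}(K)$ degrade only by powers of $\max_{i,j}h_{K,i}/h_{K,j}$ --- and this is precisely where the shape-regularity dependence of $C_{k,n}$ enters. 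With this adjustment your argument is complete.
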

\noindent We use $\|\cdot\|_{\od^k_h}$ to denote the broken $H\Lambda^k$ norm on $\mathcal{G}_h$; specifically, $\displaystyle\|\cdot\|_{\od^k_h}=\sqrt{\sum_{K\in\mathcal{G}_h}\|\cdot\|_{H\Lambda^k(K)}^2}$.

\begin{lemma} [Commutative diagrams]\label{lem:localcd}\cite{Zhang.S2022padao}
For any $\fmu\in H\Lambda^k(K)$, $0\leqslant k\leqslant n-1$,
$\mathbb{I}_K^{\od^{k+1}}\od^k\fmu=\od^k\mathbb{I}_K^{\od^k}\fmu$.
\end{lemma}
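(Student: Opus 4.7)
The plan is to verify that $\od^k\mathbb{I}^{\od^k}_K\fmu$ itself satisfies the defining relation \eqref{eq:pwap} that characterizes $\mathbb{I}^{\od^{k+1}}_K\od^k\fmu$, and then to invoke the well-posedness of that characterization. Both candidates live in $\mathcal{P}^-_1\Lambda^{k+1}(K)$: the right-hand side by construction of $\mathbb{I}^{\od^{k+1}}_K$, and the left-hand side because $\od^k\mathbb{I}^{\od^k}_K\fmu\in\R(\od^k,\mathcal{P}^-_1\Lambda^k)=\mathcal{P}_0\Lambda^{k+1}\subset\mathcal{P}^-_1\Lambda^{k+1}$ by \eqref{eq:localcouple}.

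Writing \eqref{eq:pwap} for $\mathbb{I}^{\od^{k+1}}_K\od^k\fmu$ against an arbitrary test form $\feta\in\mathcal{P}^{*,-}_1\Lambda^{k+2}$ and using $\od^{k+1}\od^k=0$ on the right-hand side, the defining identity becomes
\begin{equation*}
\langle \od^{k+1}[\mathbb{I}^{\od^{k+1}}_K\od^k\fmu],\feta\rangle_{L^2\Lambda^{k+2}(K)} - \langle \mathbb{I}^{\od^{k+1}}_K\od^k\fmu,\odelta_{k+2}\feta\rangle_{L^2\Lambda^{k+1}(K)} = -\langle \od^k\fmu,\odelta_{k+2}\feta\rangle_{L^2\Lambda^{k+1}(K)}.
\end{equation*}
Substituting the candidate $\od^k\mathbb{I}^{\od^k}_K\fmu$ in place of $\mathbb{I}^{\od^{k+1}}_K\od^k\fmu$ on the left, the leading term vanishes once more by $\od^{k+1}\od^k=0$, so the task reduces to proving
\begin{equation*}
\langle \od^k\mathbb{I}^{\od^k}_K\fmu,\,\odelta_{k+2}\feta\rangle_{L^2\Lambda^{k+1}(K)} = \langle \od^k\fmu,\,\odelta_{k+2}\feta\rangle_{L^2\Lambda^{k+1}(K)}\quad\text{for every }\feta\in\mathcal{P}^{*,-}_1\Lambda^{k+2}.
\end{equation*}

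To finish, I exploit the index-shifted form of \eqref{eq:localcouple} twice: first, $\odelta_{k+2}\feta$ lies in $\R(\odelta_{k+2},\mathcal{P}^{*,-}_1\Lambda^{k+2})=\mathcal{P}_0\Lambda^{k+1}\subset\mathcal{P}^{*,-}_1\Lambda^{k+1}$, so it is an admissible test form for the defining identity of $\mathbb{I}^{\od^k}_K$; second, $\mathcal{P}_0\Lambda^{k+1}=\N(\odelta_{k+1},\mathcal{P}^{*,-}_1\Lambda^{k+1})$ forces $\odelta_{k+1}(\odelta_{k+2}\feta)=0$. Plugging $\fmu_*:=\odelta_{k+2}\feta$ into \eqref{eq:pwap} for $\mathbb{I}^{\od^k}_K$ with $\fomega=\fmu$ therefore annihilates the $\langle\cdot,\odelta_{k+1}\fmu_*\rangle$ contributions on both sides and delivers exactly the reduced identity above. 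The argument is entirely local and algebraic on the single cell $K$; the only subtle point is the index bookkeeping that places $\odelta_{k+2}\feta$ simultaneously in the admissible test space for \eqref{eq:pwap} and in the kernel of $\odelta_{k+1}$, and both facts are immediate consequences of \eqref{eq:localcouple}.
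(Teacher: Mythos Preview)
Your argument is correct. The paper itself does not supply a proof of this lemma; it merely cites \cite{Zhang.S2022padao} and remarks (in the sentence preceding Lemma~\ref{lem:globaloa}) that the proofs from that reference are ``quite straightforward by noting \eqref{eq:localcouple}.'' Your write-up is precisely the natural verification in that spirit: you check that $\od^k\mathbb{I}^{\od^k}_K\fmu\in\mathcal{P}^-_1\Lambda^{k+1}$ satisfies the defining relation \eqref{eq:pwap} for $\mathbb{I}^{\od^{k+1}}_K(\od^k\fmu)$, reducing via $\od^{k+1}\od^k=0$ to the identity $\langle\od^k\mathbb{I}^{\od^k}_K\fmu,\odelta_{k+2}\feta\rangle=\langle\od^k\fmu,\odelta_{k+2}\feta\rangle$, and then closing with the observation from \eqref{eq:localcouple} that $\odelta_{k+2}\feta\in\mathcal{P}_0\Lambda^{k+1}\subset\mathcal{P}^{*,-}_1\Lambda^{k+1}\cap\N(\odelta_{k+1})$ is an admissible test form for which the $\odelta_{k+1}$-terms in \eqref{eq:pwap} drop out. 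The appeal to well-posedness of \eqref{eq:pwap} is legitimate, as the paper asserts it immediately after the definition. Nothing is missing.
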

It follows immediately $\mathbb{I}_h^{\od^{k+1}}\od^k\fmu=\od^k_h\mathbb{I}_h^{\od^k}\fmu$, for $\displaystyle \fmu\in\prod_{K\in\mathcal{G}_h}H\Lambda^k(K)$, 

\subsection{Tensor product space $\mathcal{Q}_1^-\Lambda^k$}
Following \cite{Arnold.D2018feec}, set $T=[-1,1]^n$, and the lowest-degree tensor product space is defined by
\begin{equation}
\q^-_1\Lambda^k(T):=\bigoplus _{\ixalpha\in \mathbb{IX}_{k,n}}\left[\bigotimes _{i=1}^n\mathcal{P}_{1-\delta_{i,\ixalpha}}([-1,1])\right]\dx^{\ixalpha_1}\wedge\dots\wedge\dx^{\ixalpha_k},
\end{equation}
where 
\begin{equation}
\delta_{i,\ixalpha}=\left\{\begin{array}{ll}1,&i\in\{\ixalpha_1,\dots,\ixalpha_k\}\\ 0,&\mbox{otherwise}
\end{array}\right.
\end{equation}
The definition is valid for general cubit the edges of which are parallel to axises. 

We denote by $V^{\q}_h\Lambda^k$ and $V^{\q}_{h0}\Lambda^k$ the conforming finite element spaces by $\q^-_1\Lambda^k$ associated with $H\Lambda^k$ and $H_0\Lambda^k$, respectively. It holds immediately 
\begin{equation}
\R(\od^k,V^{\q}_h\Lambda^k)\subset \N(\od^{k+1},V^{\q}_h\Lambda^{k+1}),\ \mbox{and}\ \R(\od^k,V^{\q}_{h0}\Lambda^k)\subset \N(\od^{k+1},V^{\q}_{h0}\Lambda^{k+1}).
\end{equation}

\subsection{Notations and symbols}
In the sequel, we use the following notations for short:
\begin{itemize}
\item for $\ixalpha\in \mathbb{IX}_{k,n}$, we use $i\in\ixalpha$ to denote that $\delta_{i,\ixalpha}=1$, 
\item for $\ixalpha\in \mathbb{IX}_{k,n}$ and $\ixbeta\in \mathbb{IX}_{k',n}$ we use $\ixbeta\subset \ixalpha$ if $k'\leqslant k$ and $\ixbeta_i\in\ixalpha$ for $1\leqslant i\leqslant k'$; particularly, $\ixbeta\subset\ixalpha$ if $|\ixbeta|=0$;
\item for $\ixalpha\in \mathbb{IX}_{k,n}$ and $\ixbeta\in \mathbb{IX}_{k,n}$, we denote for short
\begin{equation}
\dx^{\ixalpha}:=\dx^{\ixalpha_1}\wedge\dots\wedge \dx^{\ixalpha_k},\quad\mbox{and}\quad x_{\ixbeta}:=x_{\ixbeta_1}\dots x_{\ixbeta_k};
\end{equation}
particularly $x_{\ixbeta}=1$ if $|\ixbeta|=0$;
\item for $\ixalpha\in \mathbb{IX}_{k,n}$, we denote $\ixalpha^c\in \mathbb{IX}_{(n-k),n}$ such that $\delta_{i,\ixalpha}+\delta_{i,\ixalpha^c}=1,\ i=1,\dots,n.$
\end{itemize}


%
%
%
\section{Structure of $\q^-_1\Lambda^k$ revisited}
\label{sec:q-revisited}

We begin with an equivalent presentation of the space $\q^-_1\Lambda^k$. 
\begin{lemma}\label{lem:microq-}
$\q^-_1\Lambda^k={\rm span}\{x_{\ixtau}\dx^{\ixsigma}:|\ixsigma|=k,\ 0\leqslant|\ixtau|\leqslant n-k,\ \ixtau\subset\ixsigma^c\}$.
\end{lemma}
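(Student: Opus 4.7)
The plan is to unpack the direct-sum definition of $\q^-_1\Lambda^k$ one summand at a time and exhibit a monomial basis of the coefficient tensor product. Fix $\ixsigma\in\mathbb{IX}_{k,n}$. In the factor $\bigotimes_{i=1}^n\mathcal{P}_{1-\delta_{i,\ixsigma}}([-1,1])$, the exponent is $0$ precisely when $i\in\ixsigma$ (constant in $x_i$) and $1$ precisely when $i\in\ixsigma^c$ (linear in $x_i$). Hence a natural monomial basis of this tensor factor is obtained by choosing, independently in each variable $i\in\ixsigma^c$, either $1$ or $x_i$, while the variables $i\in\ixsigma$ contribute only the constant $1$.

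Next, I would encode such a choice by the subset $\ixtau\subset\ixsigma^c$ consisting of those indices in $\ixsigma^c$ for which the factor $x_i$ (rather than $1$) is selected; by the shorthand introduced in Section~\ref{sec:pre}, the corresponding monomial is precisely $x_\ixtau$, and $\ixtau$ ranges over all subsets of $\ixsigma^c$, i.e.\ $0\leqslant|\ixtau|\leqslant n-k$ with $\ixtau\subset\ixsigma^c$ (including the empty index, which yields $x_\ixtau=1$). This gives
\begin{equation*}
\bigotimes_{i=1}^n\mathcal{P}_{1-\delta_{i,\ixsigma}}([-1,1])=\mathrm{span}\{x_\ixtau:\ \ixtau\subset\ixsigma^c,\ 0\leqslant|\ixtau|\leqslant n-k\},
\end{equation*}
and the spanning set on the right is linearly independent, being a standard tensor-product monomial basis.

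Finally, I would take the direct sum over $\ixsigma\in\mathbb{IX}_{k,n}$, which yields
\begin{equation*}
\q^-_1\Lambda^k=\bigoplus_{\ixsigma\in\mathbb{IX}_{k,n}}\mathrm{span}\{x_\ixtau\dx^\ixsigma:\ \ixtau\subset\ixsigma^c\}=\mathrm{span}\{x_\ixtau\dx^\ixsigma:\ |\ixsigma|=k,\ \ixtau\subset\ixsigma^c,\ 0\leqslant|\ixtau|\leqslant n-k\},
\end{equation*}
as claimed. Linear independence of the full collection is inherited from the directness of the sum together with the linear independence of the monomials $\{x_\ixtau\}_{\ixtau\subset\ixsigma^c}$ within each $\ixsigma$-summand. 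There is no real obstacle here: the entire argument is an identification of a tensor-product monomial basis with the combinatorial data $(\ixsigma,\ixtau)$, so the only thing to be careful about is bookkeeping the index convention $i\in\ixsigma\Leftrightarrow\delta_{i,\ixsigma}=1$ from the notation subsection.
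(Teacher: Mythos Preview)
Your argument is correct and is precisely the straightforward verification the paper has in mind: it merely says ``The proof is straightforward'' without giving details, and your unpacking of the tensor-product definition into the monomial basis indexed by $(\ixsigma,\ixtau)$ with $\ixtau\subset\ixsigma^c$ is exactly that verification.
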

The proof is straightforward.

\begin{lemma}\label{lem:ecq-1}
$\N(\od^k,\q^-_1\Lambda^k)=\R(\od^{k-1},\q^-_1\Lambda^{k-1})$.
\end{lemma}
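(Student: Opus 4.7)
The plan is to prove exactness by an explicit Koszul homotopy. Throughout, assume $k\geqslant 1$. The inclusion $\supseteq$ follows from $\od^k\circ\od^{k-1}=0$ together with $\od^{k-1}(\q^-_1\Lambda^{k-1})\subseteq\q^-_1\Lambda^k$; the latter is a short computation on the basis of Lemma \ref{lem:microq-}, since differentiating $x_{\ixtau}\dx^{\ixsigma}$ peels an index $i\in\ixtau\subset\ixsigma^c$ off $\ixtau$ and adjoins it to $\ixsigma$, producing a basis element of $\q^-_1\Lambda^k$ of the type listed in Lemma \ref{lem:microq-}.

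The core step for $\subseteq$ is to verify that the Koszul operator $\okappa$ restricts to a linear map $\q^-_1\Lambda^k\to\q^-_1\Lambda^{k-1}$. For a basis element $x_{\ixtau}\dx^{\ixsigma}$ with $|\ixsigma|=k$ and $\ixtau\subset\ixsigma^c$, the definition of $\okappa$ yields
\begin{equation*}
\okappa(x_{\ixtau}\dx^{\ixsigma}) = \sum_{j=1}^k(-1)^{j+1}\, x_{\ixtau}x_{\ixsigma_j}\, \dx^{\ixsigma_1}\wedge\cdots\wedge\dx^{\ixsigma_{j-1}}\wedge\dx^{\ixsigma_{j+1}}\wedge\cdots\wedge\dx^{\ixsigma_k}.
\end{equation*}
Since $\ixsigma_j\in\ixsigma\subset\ixtau^c$, the product $x_{\ixtau}x_{\ixsigma_j}$ is the squarefree monomial indexed by $\ixtau\cup\{\ixsigma_j\}\subset\ixsigma^c\cup\{\ixsigma_j\}=(\ixsigma\setminus\{\ixsigma_j\})^c$, with cardinality $|\ixtau|+1\leqslant n-k+1$. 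Each summand therefore matches the form in Lemma \ref{lem:microq-} for $\q^-_1\Lambda^{k-1}$.

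Next I will invoke the Koszul--Cartan identity $(\od\okappa+\okappa\od)\omega=(r+k)\omega$, valid for any homogeneous polynomial $k$-form $\omega$ of polynomial degree $r$. Given $\omega\in\N(\od^k,\q^-_1\Lambda^k)$, decompose it into homogeneous parts $\omega=\sum_{r\geqslant 0}\omega_r$; each $\omega_r$ stays in $\q^-_1\Lambda^k$ because the basis in Lemma \ref{lem:microq-} is polynomially graded, and since $\od$ lowers polynomial degree by one, $\od\omega=0$ separates into $\od\omega_r=0$ for every $r$. Because $k\geqslant 1$, the scalar $r+k$ is nonzero, so $\omega_r=\frac{1}{r+k}\od(\okappa\omega_r)$. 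Summing in $r$ gives $\omega=\od^{k-1}\eta$ with $\eta=\sum_r\okappa\omega_r/(r+k)\in\q^-_1\Lambda^{k-1}$. The main, and essentially the only, obstacle is the bookkeeping verification that $\okappa$ preserves the $\q^-_1$ structure; this hinges entirely on the complement condition $\ixtau\subset\ixsigma^c$ from Lemma \ref{lem:microq-}, which prevents the extra factor $x_{\ixsigma_j}$ from duplicating a coordinate already present in $x_{\ixtau}$ and thereby leaving the multilinear tensor-product space.
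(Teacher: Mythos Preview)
Your proof is correct and follows essentially the same route as the paper: decompose into homogeneous components, verify that the Koszul operator $\okappa$ preserves the $\q^-_1$ structure, and apply the homotopy identity $(\od\okappa+\okappa\od)\omega=(r+k)\omega$ to invert $\od$ on closed forms. Your write-up is in fact more careful than the paper's---you explicitly check $\okappa(\q^-_1\Lambda^k)\subset\q^-_1\Lambda^{k-1}$ via the basis in Lemma~\ref{lem:microq-} and state the correct scalar $r+k$ (the paper writes $(k+1)$, which appears to be a slip), and you also handle the easy inclusion $\supseteq$ that the paper leaves implicit.
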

\begin{proof}
By Lemma \ref{lem:microq-}, we can rewrite  $\displaystyle\q^-_1\Lambda^k=\bigoplus_{j=0}^n\left[\mathcal{H}_j\Lambda^k\cap \q^-_1\Lambda^k\right]$, where $\mathcal{H}_r\Lambda^k$ is the space of the k-forms with homogeneous polynomial coefficients of degree $r$. Further, given $\fmu\in \q^-_1\Lambda^k$ and $\displaystyle\fmu=\sum_{j=0}^n\fmu_j$ with $\fmu_j\in \mathcal{H}_j\Lambda^k\cap \q^-_1\Lambda^k$, $\od^k\fmu=0$ if and only if $\od^k\fmu_j=0$ for $0\leqslant j\leqslant n$. It follows then
$$
\N(\od^k,\q^-_1\Lambda^k)=\bigoplus_{j=0}^n\N(\od^k,\q^-_1\Lambda^k\cap \mathcal{H}_j\Lambda^k).
$$

Now given $\fmu_j\in \q^-_1\Lambda^k\cap \mathcal{H}_j\Lambda^k$ such that $\od^k\fmu_j=0$, set $\fomega_j=\kappa\fmu_j$, then $\fomega_j\in \q^-_1\Lambda^{k-1}$ and $\od^{k-1}\fomega_j=(k+1)\fmu_j$. Namely, 
$$
\N(\od^k,\q^-_1\Lambda^k\cap \mathcal{H}_j\Lambda^k)\subset \R(\od^{k-1},\q^-_1\Lambda^{k-1})
$$
It follows further that
$$
\N(\od^k,\q^-_1\Lambda^k)=\bigoplus_{j=0}^n\N(\od^k,\q^-_1\Lambda^k\cap \mathcal{H}_j\Lambda^k)\subset \R(\od^{k-1},\q^-_1\Lambda^{k-1}).
$$
The proof is completed. 
\end{proof}

\begin{remark}
By homotopy formula, the space $\q^-_1\Lambda^k(K)$ can be written as
\begin{equation}\label{eq:decomq-1K}
\q^-_1\Lambda^k(K)=\R(\od^{k-1},\q^-_1\Lambda^{k-1})\oplus \okappa\R(\od^k,\q^-_1\Lambda^k).
\end{equation} 
\end{remark}

Denote 
\begin{equation}
\q^{*,-}_1\Lambda^k:=\star(\q^-_1\Lambda^{n-k}).
\end{equation}
It holds immediately that 
\begin{equation}\label{eq:microqstar-}
\q^{*,-}_1\Lambda^k={\rm span}\{x_{\ixtau'}\dx^{\ixsigma'}:|\ixsigma'|=k,\ 0\leqslant|\ixtau'|\leqslant k,\ \ixtau'\subset\ixsigma'\}
\end{equation}
and
\begin{equation}
\N(\od^k,\q^-_1\Lambda^k)=\star\N(\odelta_{n-k},\q^{*,-}_1\Lambda^{n-k}), \quad \R(\od^k,\q^-\Lambda^k)=\star\R(\odelta_{n-k},\q^{*,-}_1\Lambda^{n-k}).
\end{equation}
Further,
\begin{equation}\label{eq:esqstar-}
\N(\odelta_k,\q^{*,-}_1\Lambda^k)=\R(\odelta_{k+1},\q^{*,-}_1\Lambda^{k+1}).
\end{equation}

\begin{proposition}\label{prop:othorg}
Given $x_{\ixtau}\dx^{\ixsigma}\in\q^-_1\Lambda^k$ and $x_{\ixtau'}\dx^{\ixsigma'}\in\q^{*,-}_1\Lambda^k$, 
\begin{equation}\label{eq:ipnv}
\langle x_{\ixtau}\dx^{\ixsigma},x_{\ixtau'}\dx^{\ixsigma'}\rangle_{L^2\Lambda^k(T)}\neq 0\ \ \mbox{only\ if}\ \ |\ixtau|=|\ixtau'|=0,\ \mbox{and}\ \ixsigma=\ixsigma'.
\end{equation}
\end{proposition}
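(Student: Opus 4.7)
The plan is to reduce the inner product on $T=[-1,1]^n$ to a product of one-dimensional integrals, each of which is easy to evaluate due to the symmetric integration interval.

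First, I would observe that $\{\dx^{\ixsigma}\}_{\ixsigma\in\mathbb{IX}_{k,n}}$ is orthonormal in the pointwise inner product on alternating $k$-forms, so
\[
\langle x_{\ixtau}\dx^{\ixsigma},\,x_{\ixtau'}\dx^{\ixsigma'}\rangle_{L^2\Lambda^k(T)}
=\delta_{\ixsigma,\ixsigma'}\int_T x_{\ixtau}\,x_{\ixtau'}\,\dx.
\]
Thus the inner product vanishes automatically when $\ixsigma\neq\ixsigma'$, giving the second necessary condition.

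Next, suppose $\ixsigma=\ixsigma'$. By the structure of $\q^-_1\Lambda^k$ from Lemma~\ref{lem:microq-}, $\ixtau\subset\ixsigma^c$, and by \eqref{eq:microqstar-}, $\ixtau'\subset\ixsigma'=\ixsigma$. In particular $\ixtau\cap\ixtau'=\emptyset$, so the indices occurring in $x_{\ixtau}\,x_{\ixtau'}$ are pairwise distinct and each enters to the first power. Hence the integrand factors over the coordinates of $T$, giving
\[
\int_T x_{\ixtau}\,x_{\ixtau'}\,\dx
=\prod_{i\in\ixtau\cup\ixtau'}\!\Bigl(\int_{-1}^{1}x_i\,\dx_i\Bigr)\cdot\!\!\prod_{i\notin\ixtau\cup\ixtau'}\!\Bigl(\int_{-1}^{1}\dx_i\Bigr).
\]
Since $\int_{-1}^{1}x_i\,\dx_i=0$, the whole product vanishes as soon as $\ixtau\cup\ixtau'\neq\emptyset$. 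Therefore a nonzero value forces $|\ixtau|=|\ixtau'|=0$, which completes the proof.

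I do not expect any genuine obstacle here; the argument is essentially a tensor-product computation exploiting the oddness of $x_i$ on $[-1,1]$ together with the disjointness $\ixtau\cap\ixtau'=\emptyset$ imposed by the defining constraints of $\q^-_1\Lambda^k$ and $\q^{*,-}_1\Lambda^k$. The only point worth stating cleanly is the elementary fact that on a symmetric interval the integral of any monomial containing an odd power of a variable vanishes.
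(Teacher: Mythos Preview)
Your proof is correct and follows essentially the same approach as the paper: reduce to $\ixsigma=\ixsigma'$ by orthogonality of the basis forms, then exploit the oddness of $x_i$ on $[-1,1]$ together with the constraints $\ixtau\subset\ixsigma^c$ and $\ixtau'\subset\ixsigma$ to force $|\ixtau|=|\ixtau'|=0$. The only cosmetic difference is the order of reasoning---you invoke the disjointness $\ixtau\cap\ixtau'=\emptyset$ first and then integrate, whereas the paper first argues $\int_T x_{\ixtau}x_{\ixtau'}\neq0\Rightarrow\ixtau=\ixtau'$ and then applies the inclusion constraints---but the content is identical.
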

\begin{proof}
That $\langle x_{\ixtau}\dx^{\ixsigma},x_{\ixtau'}\dx^{\ixsigma'}\rangle_{L^2\Lambda^k(T)}\neq 0$ implies that $\ixsigma=\ixsigma'$, and $\int_Tx_{\ixtau}x_{\ixtau'}\neq0$; noting that $T=[-1,1]^n$ and $\int_Tx_i=0$, $1\leqslant i\leqslant n$, we obtain that $\ixtau=\ixtau'$. Since $\ixtau\subset\ixsigma^c$ and $\ixtau'\subset\ixsigma'$, it follows further that $|\ixtau|=|\ixtau'|=0$. The proof is completed. 
\end{proof}

Denote 
\begin{equation}
\okappa^\delta:=\star\circ\okappa\circ\star.
\end{equation}

\begin{remark}
It is interesting to note that, roughly speaking, given 
$$
\fomega\in{\rm span}\{x_{\ixtau}\dx^{\ixsigma}:|\ixtau|=m,|\ixsigma|=k\},
$$ 
\begin{equation}
\left\{
\begin{array}{ccl}
\od^k\fomega &\in&{\rm span}\{x_{\ixtau'}\dx^{\ixsigma'}:|\ixtau'|=m-1,|\ixsigma'|=k+1\},
\\
\okappa\fomega&\in&{\rm span}\{x_{\ixtau'}\dx^{\ixsigma'}:|\ixtau'|=m+1,|\ixsigma'|=k-1\},
\\
\odelta_k\fomega&\in&{\rm span}\{x_{\ixtau'}\dx^{\ixsigma'}:|\ixtau'|=m-1,|\ixsigma'|=k-1\},
\\
\okappa^\delta\fomega&\in&{\rm span}\{x_{\ixtau'}\dx^{\ixsigma'}:|\ixtau'|=m+1,|\ixsigma'|=k+1\}.
\end{array}
\right.
\end{equation}
\end{remark}

The lemma below reveals the connections among $\mathcal{P}^-_1\Lambda^k$, $\mathcal{P}^{*,-}_1\Lambda^k$, $\q^-_1\Lambda^k$ and $\q^{*,-}_1\Lambda^k$. 

\begin{lemma}\label{lem:ap=int}
It holds for $\fomega\in\q^-_1\Lambda^k$ that for $\fmu\in \q^{-,*}_1\Lambda^{k+1}$,
\begin{equation}\label{eq:ap=int}
\langle \od^k\mathbb{I}^{\od^k}_T\fomega, \fmu\rangle_{L^2\Lambda^{k+1}(T)}-\langle \mathbb{I}^{\od^k}_T\fomega,\odelta_{k+1} \fmu\rangle_{L^2\Lambda^k(T)}
=
\langle \od^k\fomega, \fmu\rangle_{L^2\Lambda^{k+1}(T)}-\langle \fomega,\odelta_{k+1} \fmu\rangle_{L^2\Lambda^k(T)}. 
\end{equation}
\end{lemma}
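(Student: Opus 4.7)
The identity \eqref{eq:pwap} already gives \eqref{eq:ap=int} whenever $\fmu$ lies in $\mathcal{P}^{*,-}_1\Lambda^{k+1}$; the task is to extend the identity to all $\fmu \in \q^{*,-}_1\Lambda^{k+1}$, which strictly contains $\mathcal{P}^{*,-}_1\Lambda^{k+1}$. My plan is to split \eqref{eq:ap=int} into
$$
(\mathrm{A})\ \ \langle \od^k\mathbb{I}^{\od^k}_T\fomega,\fmu\rangle = \langle \od^k\fomega,\fmu\rangle, \qquad (\mathrm{B})\ \ \langle \mathbb{I}^{\od^k}_T\fomega,\odelta_{k+1}\fmu\rangle = \langle \fomega,\odelta_{k+1}\fmu\rangle,
$$
and prove each by reducing the pairings, via Proposition \ref{prop:othorg}, to pairings against their $\mathcal{P}_0$-components. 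The key reduction, immediate from Proposition \ref{prop:othorg}, is that for $\fomega' \in \q^-_1\Lambda^j$ and $\fvarphi \in \q^{*,-}_1\Lambda^j$ one has $\langle \fomega',\fvarphi\rangle = \langle Q_j\fomega',Q_j\fvarphi\rangle$, where $Q_j$ denotes the $L^2$-projection onto $\mathcal{P}_0\Lambda^j$.

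For (A), note that $\mathbb{I}^{\od^k}_T\fomega \in \mathcal{P}^-_1\Lambda^k$ forces $\od^k\mathbb{I}^{\od^k}_T\fomega \in \mathcal{P}_0\Lambda^{k+1}$ by \eqref{eq:localcouple}, and $\od^k\fomega \in \q^-_1\Lambda^{k+1}$ since $\q^-_1$ is closed under $\od$ (already used implicitly in the proof of Lemma \ref{lem:ecq-1}). Both sides of (A) are therefore pairings of $\q^-_1\Lambda^{k+1}$-elements with $\fmu \in \q^{*,-}_1\Lambda^{k+1}$, and the reduction above lets each side be written as a pairing against $Q_{k+1}\fmu \in \mathcal{P}_0\Lambda^{k+1} \subset \mathcal{P}^{*,-}_1\Lambda^{k+1}$. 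Because $\odelta_{k+1}(Q_{k+1}\fmu)=0$, testing \eqref{eq:pwap} against $Q_{k+1}\fmu$ immediately yields (A).

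For (B), first observe that $\odelta_{k+1}\fmu \in \q^{*,-}_1\Lambda^k$, since $\q^{*,-}_1 = \star\,\q^-_1$ and $\q^-_1$ is closed under $\od$. The same reduction then shows that both sides of (B) equal the respective pairings against $Q_k\,\odelta_{k+1}\fmu \in \mathcal{P}_0\Lambda^k$. Using \eqref{eq:localcouple}, pick $\feta \in \mathcal{P}^{*,-}_1\Lambda^{k+1}$ with $\odelta_{k+1}\feta = Q_k\,\odelta_{k+1}\fmu$. Applying \eqref{eq:pwap} to $\feta$ and rearranging gives
$$
\langle \fomega - \mathbb{I}^{\od^k}_T\fomega,\,Q_k\,\odelta_{k+1}\fmu\rangle = \langle \od^k\fomega - \od^k\mathbb{I}^{\od^k}_T\fomega,\,\feta\rangle,
$$
and the right-hand side vanishes by (A) applied to $\feta \in \mathcal{P}^{*,-}_1\Lambda^{k+1} \subset \q^{*,-}_1\Lambda^{k+1}$. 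The main technical hinge is the orthogonality of Proposition \ref{prop:othorg}; it is what upgrades the defining property on $\mathcal{P}^{*,-}_1\Lambda^{k+1}$ into an identity sensitive to all of $\q^{*,-}_1\Lambda^{k+1}$. The rest is bookkeeping.
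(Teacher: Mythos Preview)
Your argument is correct. Both proofs ultimately rest on Proposition~\ref{prop:othorg}, but the organization differs. The paper decomposes the test space $\q^{*,-}_1\Lambda^{k+1}$ as
\[
\mathcal{P}_0\Lambda^{k+1}\ \oplus\ \okappa^\delta(\mathcal{P}_0\Lambda^k)\ \oplus\ (\text{two ``higher'' pieces with }|\ixtau'|\geqslant 1\text{ or }|\ixtau'|\geqslant 2),
\]
observes that the first two summands constitute $\mathcal{P}^{*,-}_1\Lambda^{k+1}$ (where \eqref{eq:ap=int} is the defining relation \eqref{eq:pwap}), and then uses Proposition~\ref{prop:othorg} to show that on the remaining summands each side of \eqref{eq:ap=int} vanishes outright. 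You instead split the identity itself into the halves (A) and (B) and use Proposition~\ref{prop:othorg} in the condensed form $\langle\fomega',\fvarphi\rangle=\langle Q_j\fomega',Q_j\fvarphi\rangle$ to replace every test function by its $\mathcal{P}_0$ average, thereby reducing to test functions already in $\mathcal{P}^{*,-}_1\Lambda^{k+1}$.

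Your route is a bit more economical: it bypasses the explicit four-part decomposition of $\q^{*,-}_1\Lambda^{k+1}$ and works uniformly for all $\fmu$. The paper's route, in exchange, makes the structural point that $\q^{*,-}_1\Lambda^{k+1}$ splits into $\mathcal{P}^{*,-}_1\Lambda^{k+1}$ plus a complement that is \emph{invisible} to $\q^-_1\Lambda^k$-data under the bilinear form, which is a slightly stronger statement than what the lemma records. Either argument is complete; the only quiet dependencies in yours are $\mathcal{P}^{-}_1\Lambda^k\subset\q^{-}_1\Lambda^k$ and $\mathcal{P}^{*,-}_1\Lambda^{k+1}\subset\q^{*,-}_1\Lambda^{k+1}$, both of which follow immediately from Lemma~\ref{lem:microq-} and \eqref{eq:microqstar-}.
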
 
\begin{proof}
It is easy to note that 
$$
\q^{-,*}\Lambda^{k+1}=\N(\odelta_{k+1},\q^{-,*}\Lambda^{k+1})\oplus \okappa^\delta(\R(\odelta_{k+1},\q^{-,*}\Lambda^{k+1})),
$$
$$
\N(\odelta_{k+1},\q^{-,*}\Lambda^{k+1})=\mathcal{P}_0\Lambda^{k+1}\oplus \left[\N(\odelta_{k+1},\q^{-,*}\Lambda^{k+1})\cap{\rm span}\left\{x_{\ixtau'}\dx^{\ixsigma'}:|\ixsigma'|={k+1},\ |\ixtau'|\geqslant 1,\ \ixtau'\subset\ixsigma'\right\}\right],
$$
and
\begin{multline*}
\okappa^\delta(\R(\odelta_{k+1},\q^{-,*}\Lambda^{k+1}))=\okappa^\delta\left(\mathcal{P}_0\Lambda^k\oplus\left[\R(\odelta_{k+1},\q^{-,*}\Lambda^{k+1})\cap {\rm span}\{x_{\ixtau'}\dx^{\ixsigma'}:|\ixsigma'|=k,\ |\ixtau'|\geqslant 1,\ \ixtau'\subset\ixsigma'\}\right]\right)
\\
=\okappa^\delta(\mathcal{P}_0\Lambda^k)\oplus \okappa^\delta \left(\R(\odelta_{k+1},\q^{-,*}\Lambda^{k+1})\cap {\rm span}\left\{x_{\ixtau'}\dx^{\ixsigma'}:|\ixsigma'|=k,\ |\ixtau'|\geqslant 1,\ \ixtau'\subset\ixsigma'\right\}\right)
\\
=\okappa^\delta(\mathcal{P}_0\Lambda^k) \oplus \left[\okappa^\delta \left(\R(\odelta_{k+1},\q^{-,*}\Lambda^{k+1})\right)\cap {\rm span}\left\{x_{\ixtau'}\dx^{\ixsigma'}:|\ixsigma'|={k+1},\ |\ixtau'|\geqslant 2,\ \ixtau'\subset\ixsigma'\right\}\right].
\end{multline*}
Therefore,
\begin{multline*}
\q^{-,*}\Lambda^{k+1}=\mathcal{P}_0\Lambda^{k+1}\oplus \okappa^\delta(\mathcal{P}_0\Lambda^k) 
\\
\oplus\left[\N(\odelta_{k+1},\q^{-,*}\Lambda^{k+1})\cap{\rm span}\{x_{\ixtau'}\dx^{\ixsigma'}:|\ixsigma'|={k+1},\ |\ixtau'|\geqslant 1,\ \ixtau'\subset\ixsigma'\}\right]
\\
\oplus  \left[\okappa^\delta (\R(\odelta_{k+1},\q^{-,*}\Lambda^{k+1}))\cap {\rm span}\left\{x_{\ixtau'}\dx^{\ixsigma'}:|\ixsigma'|={k+1},\ |\ixtau'|\geqslant 2,\ \ixtau'\subset\ixsigma'\right\}\right].
\end{multline*}

Now, given 
$$
\fmu\in \mathcal{P}_0\Lambda^{k+1}\oplus \okappa^\delta(\mathcal{P}_0\Lambda^k),
$$ 
\eqref{eq:ap=int} holds by definition. Then, as for $x_{\ixtau}\dx^{\ixsigma}\ \mbox{with}\ \ixsigma\in \mathbb{IX}_{k,n} \ \mbox{and}\ \ixtau\subset\ixsigma^c$ it holds by \eqref{eq:ipnv} that 
\begin{equation}\label{eq:yigevanish}
\langle \od^k (x_{\ixtau}\dx^{\ixsigma}),x_{\ixtau'}\dx^{\ixsigma'}\rangle_{L^2\Lambda^{k+1}(T)}=0,\ \mbox{for}\ x_{\ixtau'}\dx^{\ixsigma'}\ \mbox{with}\ \ixtau'\subset\ixsigma'\in \mathbb{IX}_{k+1,n}\ \mbox{and}\ |\ixtau'|\geqslant 1,
\end{equation}
it follows that, given 
$$
\fmu\in \left[\N(\odelta_{k+1},\q^{-,*}\Lambda^{k+1})\cap{\rm span}\{x_{\ixtau'}\dx^{\ixsigma'}:|\ixsigma'|={k+1},\ |\ixtau'|\geqslant 1,\ \ixtau'\subset\ixsigma'\}\right],
$$ 
both the left and right hands of \eqref{eq:ap=int} are zero, and \eqref{eq:ap=int} is verified. Finally, as for $x_{\ixtau}\dx^{\ixsigma}\ \mbox{with}\ \ixsigma\in \mathbb{IX}_{k,n} \ \mbox{and}\ \ixtau\subset\ixsigma^c$ it holds further, beside \eqref{eq:yigevanish}, that 
\begin{equation}
\langle \fomega,\odelta_{k+1}\fmu\rangle_{L^2\Lambda^k(T)}=0,\ \mbox{for}\ x_{\ixtau'}\dx^{\ixsigma'}\ \mbox{with}\ \ixtau'\subset\ixsigma'\in \mathbb{IX}_{k+1,n}\ \mbox{and}\ |\ixtau'|\geqslant 2,
\end{equation}
it follows that, again, given 
$$
\fmu\in \left[\okappa^\delta (\R(\odelta_{k+1},\q^{-,*}\Lambda^{k+1}))\cap {\rm span}\left\{x_{\ixtau'}\dx^{\ixsigma'}:|\ixsigma'|={k+1},\ |\ixtau'|\geqslant 2,\ \ixtau'\subset\ixsigma'\right\}\right],
$$ 
both the left and right hands of \eqref{eq:ap=int} are zero, and \eqref{eq:ap=int} is verified. 

The proof is completed. 
\end{proof}

\begin{remark}
For a general rectangle $K$ whose edges are parallel to the respective axises but which is not necessarily $[-1,1]^n$, we introduce 
$$
\tilde{x}_i:=x_i+b_i,\ \ \mbox{such\ that}\ \int_K\tilde{x}_i=0,
$$
and define a $K$-dependent Koszul operator $\okappa_K$ by 
$$
\okappa_K(\dx^{\ixalpha_1}\wedge\dots\wedge\dx^{\ixalpha_k})=\sum_{j=1}^k(-1)^{j+1}\tilde{x}_{\ixalpha_j}\dx^{\ixalpha_1}\wedge\dots\wedge\dx^{\ixalpha_{j-1}}\wedge\dx^{\ixalpha_{j+1}}\wedge\dots\wedge\dx^{\ixalpha_k}.
$$ 
Then with respect to a $K$-dependent presentation $\tilde{x}_{\ixtau}\dx^{\ixsigma}$, Lemmas \ref{lem:microq-} and \ref{lem:ecq-1}, \eqref{eq:microqstar-} and \eqref{eq:esqstar-}, and particularly Proposition \ref{prop:othorg} still hold, and Lemma \ref{lem:ap=int} can be proved the same way. 
\end{remark}


%
%
%
\section{Finite element spaces for $H\Lambda^k$ by Whitney forms}
\label{sec:whitrec}

On a cubical mesh $\mathcal{G}_h$, define finite element spaces for $H\Lambda^k$ by Whitney forms as
\begin{equation*}
\fW^{\rm def}_h\Lambda^k:=\left\{\fomega_h\in\mathcal{P}^-_1\Lambda^k(\mathcal{G}_h):\sum_{K\in\mathcal{G}_h}\langle\od^k\fomega_h,\fmu_h\rangle_{L^2\Lambda^{k+1}(K)}-\langle\fomega_h,\odelta_{k+1}\fmu_h\rangle_{L^2\Lambda^k(K)}=0,\ \forall\,\fmu_h\in V^{\q,*}_{h0}\Lambda^{k+1}\right\},
\end{equation*}
and 
\begin{equation*}
\fW^{\rm def}_{h0}\Lambda^k:=\left\{\fomega_h\in\mathcal{P}^-_1\Lambda^k(\mathcal{G}_h):\sum_{K\in\mathcal{G}_h}\langle\od^k\fomega_h,\fmu_h\rangle_{L^2\Lambda^{k+1}(K)}-\langle\fomega_h,\odelta_{k+1}\fmu_h\rangle_{L^2\Lambda^k(K)}=0,\ \forall\,\fmu_h\in V^{\q,*}_h\Lambda^{k+1}\right\}.
\end{equation*}

\subsection{Capacity of the finite element spaces}

The spaces $\fW^{\rm def}_h\Lambda^k$ and $\fW^{\rm def}_{h0}\Lambda^k$ are not defined as a finite element in Ciarlet's sense. Its capacity, though not self-evident, can be confirmed by the lemma below.

\begin{lemma}\label{lem:capaw}
$\fW^{\rm def}_h\Lambda^k\supset \R(\mathbb{I}^{\od^k}_h,V^{\q}_h\Lambda^k)$, and $\fW^{\rm def}_{h0}\Lambda^k\supset \R(\mathbb{I}^{\od^k}_h,V^{\q}_{h0}\Lambda^k)$. 
\end{lemma}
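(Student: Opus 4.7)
The plan is to verify the defining integral identity of $\fW^{\rm def}_h\Lambda^k$ directly, by reducing it to a cell-local statement (Lemma \ref{lem:ap=int}) plus a global integration-by-parts identity (Lemma \ref{lem:a-book-6.5}). I will treat the homogeneous and non-homogeneous cases in parallel since the only difference is which adjoint pair one invokes.

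First I would fix $\fsigma_h \in V^{\q}_h\Lambda^k$ and set $\fomega_h := \mathbb{I}^{\od^k}_h\fsigma_h \in \mathcal{P}^-_1\Lambda^k(\mathcal{G}_h)$. Pick any test form $\fmu_h\in V^{\q,*}_{h0}\Lambda^{k+1}$. Because $V^{\q}_h\Lambda^k$ is defined cellwise by $\q^-_1\Lambda^k$ and $V^{\q,*}_{h0}\Lambda^{k+1}$ by $\star(\q^-_1\Lambda^{k+1})=\q^{*,-}_1\Lambda^{k+1}$, on each cell $K$ we have $\fsigma_h|_K \in \q^-_1\Lambda^k(K)$ and $\fmu_h|_K\in \q^{*,-}_1\Lambda^{k+1}(K)$. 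Hence Lemma \ref{lem:ap=int}, applied in its $K$-dependent form (as noted in the remark following it), yields the identity
\begin{equation*}
\langle \od^k\fomega_h,\fmu_h\rangle_{L^2\Lambda^{k+1}(K)} - \langle\fomega_h,\odelta_{k+1}\fmu_h\rangle_{L^2\Lambda^k(K)} = \langle \od^k\fsigma_h,\fmu_h\rangle_{L^2\Lambda^{k+1}(K)} - \langle \fsigma_h,\odelta_{k+1}\fmu_h\rangle_{L^2\Lambda^k(K)}
\end{equation*}
on every $K\in\mathcal{G}_h$.

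Summing the above over $K\in\mathcal{G}_h$, the left-hand side is exactly the quantity we want to vanish. For the right-hand side, I would observe that $\fsigma_h \in V^{\q}_h\Lambda^k \subset H\Lambda^k(\Omega)$ is conforming, and $\fmu_h \in V^{\q,*}_{h0}\Lambda^{k+1}\subset H^*_0\Lambda^{k+1}(\Omega)$. By Lemma \ref{lem:a-book-6.5}(1), the pair $(\od^k,H\Lambda^k)$ and $(\odelta_{k+1},H^*_0\Lambda^{k+1})$ is adjoint, so
\begin{equation*}
\sum_{K\in\mathcal{G}_h}\langle \od^k\fsigma_h,\fmu_h\rangle_{L^2\Lambda^{k+1}(K)} - \langle \fsigma_h,\odelta_{k+1}\fmu_h\rangle_{L^2\Lambda^k(K)} = \langle \od^k\fsigma_h,\fmu_h\rangle_{L^2\Lambda^{k+1}(\Omega)} - \langle \fsigma_h,\odelta_{k+1}\fmu_h\rangle_{L^2\Lambda^k(\Omega)} = 0,
\end{equation*}
which gives $\fomega_h\in\fW^{\rm def}_h\Lambda^k$ and establishes the first inclusion.

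The homogeneous inclusion $\fW^{\rm def}_{h0}\Lambda^k\supset \R(\mathbb{I}^{\od^k}_h,V^{\q}_{h0}\Lambda^k)$ is carried out by the same argument with the roles of boundary conditions swapped: the test form $\fmu_h$ now runs over $V^{\q,*}_h\Lambda^{k+1}\subset H^*\Lambda^{k+1}(\Omega)$, while $\fsigma_h\in V^{\q}_{h0}\Lambda^k\subset H_0\Lambda^k(\Omega)$, and one invokes Lemma \ref{lem:a-book-6.5}(2) for the adjoint pair $(\od^k,H_0\Lambda^k)$ and $(\odelta_{k+1},H^*\Lambda^{k+1})$. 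I do not expect a serious obstacle: the nontrivial content is packed into Lemma \ref{lem:ap=int} (the cellwise adjoint-projection property of $\mathbb{I}^{\od^k}_K$ extending from $\mathcal{P}^{*,-}_1\Lambda^{k+1}$ to all of $\q^{*,-}_1\Lambda^{k+1}$), which has already been established; the remaining step is merely to chain the local identity with the global adjointness.
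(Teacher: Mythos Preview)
Your proof is correct and follows essentially the same approach as the paper: apply Lemma \ref{lem:ap=int} cellwise to transfer the defining identity from $\mathbb{I}^{\od^k}_h\fsigma_h$ back to $\fsigma_h$, sum over cells, and use conformity of $V^{\q}_h\Lambda^k\subset H\Lambda^k$ and $V^{\q,*}_{h0}\Lambda^{k+1}\subset H^*_0\Lambda^{k+1}$ to conclude the sum vanishes. Your version is slightly more explicit in invoking Lemma \ref{lem:a-book-6.5} for the last step, which the paper leaves implicit.
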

\begin{proof}
Given $\fomega_h\in V^{\q}_h\Lambda^k$, for any $\fmu_h\in V^{\q,*}_{h0}\Lambda^{k+1}$,
By Lemma \ref{lem:ap=int}, 
$$
\sum_{K\in\mathcal{G}_h}\langle \od^k\mathbb{I}^{\od^k}_h\fomega_h, \fmu_h\rangle_{L^2\Lambda^{k+1}(K)}-\langle \mathbb{I}^{\od^k}_h\fomega_h,\odelta_{k+1} \fmu_h\rangle_{L^2\Lambda^k(K)}
=
\sum_{K\in\mathcal{G}_h}\langle \od^k\fomega_h, \fmu_h\rangle_{L^2\Lambda^{k+1}(K)}-\langle \fomega_h,\odelta_{k+1} \fmu_h\rangle_{L^2\Lambda^k(K)}=0;
$$
namely, $\mathbb{I}^{\od^k}_h\fomega_h\in \fW^{\rm def}_h\Lambda^k.$ Similarly, 
$\R(\mathbb{I}^{\od^k}_h,V^{\q}_{h0}\Lambda^k)\subset \fW^{\rm def}_{h0}\Lambda^k$.
The proof is completed. 
\end{proof}

\begin{lemma}\label{lem:oawdef}
Given $\fomega\in H\Lambda^k$, 
\begin{equation}\label{eq:oahlambda}
\inf_{\fmu_h\in \fW^{\rm def}_h\Lambda^k}\|\fomega-\fmu_h\|_{\od^k_h}\leqslant C\left(\inf_{\feta_h\in V^{\q}_h\Lambda^k}\|\fomega-\feta_h\|_{\od^k}+\inf_{\fmu_h\in \mathcal{P}^-_1\Lambda^k(\mathcal{G}_h)}\|\fomega-\fmu_h\|_{\od^k_h}\right),
\end{equation}
and if $\fomega\in H_0\Lambda^k$, 
\begin{equation}\label{eq:oahlambda0}
\inf_{\fmu_h\in \fW^{\rm def}_{h0}\Lambda^k}\|\fomega-\fmu_h\|_{\od^k_h}\leqslant C\left(\inf_{\feta_h\in V^{\q}_{h0}\Lambda^k}\|\fomega-\feta_h\|_{\od^k}+\inf_{\fmu_h\in \mathcal{P}^-_1\Lambda^k(\mathcal{G}_h)}\|\fomega-\fmu_h\|_{\od^k_h}\right).
\end{equation}
\end{lemma}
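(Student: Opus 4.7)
The plan is to exploit Lemma~\ref{lem:capaw}, which says that $\mathbb{I}^{\od^k}_h$ sends the conforming space $V^{\q}_h\Lambda^k$ into the nonconforming Whitney space $\fW^{\rm def}_h\Lambda^k$. So given any $\feta_h\in V^{\q}_h\Lambda^k$, the candidate $\fmu_h:=\mathbb{I}^{\od^k}_h\feta_h$ is admissible on the left-hand side of \eqref{eq:oahlambda}, and it suffices to control $\|\fomega-\mathbb{I}^{\od^k}_h\feta_h\|_{\od^k_h}$ and then optimize over $\feta_h$.

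The estimate I would use splits the error by inserting $\mathbb{I}^{\od^k}_h\fomega$:
$$
\|\fomega-\mathbb{I}^{\od^k}_h\feta_h\|_{\od^k_h}\leqslant \|\fomega-\mathbb{I}^{\od^k}_h\fomega\|_{\od^k_h}+\|\mathbb{I}^{\od^k}_h(\fomega-\feta_h)\|_{\od^k_h}.
$$
The first summand is bounded by $C_{k,n}\inf_{\fvartheta_h\in\mathcal{P}^-_1\Lambda^k(\mathcal{G}_h)}\|\fomega-\fvartheta_h\|_{\od^k_h}$ directly from Lemma~\ref{lem:globaloa}. For the second summand, I would derive a stability estimate for $\mathbb{I}^{\od^k}_h$ from the same lemma: choosing the zero form as competitor in Lemma~\ref{lem:globaloa} yields $\|\fzeta-\mathbb{I}^{\od^k}_h\fzeta\|_{\od^k_h}\leqslant C_{k,n}\|\fzeta\|_{\od^k_h}$, hence $\|\mathbb{I}^{\od^k}_h\fzeta\|_{\od^k_h}\leqslant (1+C_{k,n})\|\fzeta\|_{\od^k_h}$ by the triangle inequality. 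Applied to $\fzeta=\fomega-\feta_h$, and since the conformity of $\feta_h$ lets me replace $\|\cdot\|_{\od^k_h}$ with $\|\cdot\|_{\od^k}$ on that difference, this gives $\|\mathbb{I}^{\od^k}_h(\fomega-\feta_h)\|_{\od^k_h}\leqslant C\|\fomega-\feta_h\|_{\od^k}$.

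Taking the infimum over $\feta_h\in V^{\q}_h\Lambda^k$ on the right-hand side then produces \eqref{eq:oahlambda}. The homogeneous-boundary version \eqref{eq:oahlambda0} is obtained by the identical argument, using instead the inclusion $\R(\mathbb{I}^{\od^k}_h,V^{\q}_{h0}\Lambda^k)\subset \fW^{\rm def}_{h0}\Lambda^k$ from Lemma~\ref{lem:capaw} together with the fact that $V^{\q}_{h0}\Lambda^k\subset H_0\Lambda^k$.

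I do not anticipate any serious obstacle at this stage: the genuinely nontrivial computation has already been absorbed into Lemma~\ref{lem:ap=int} and then into Lemma~\ref{lem:capaw}, so the approximation estimate reduces to a one-line triangle inequality plus the cell-by-cell quasi-optimality of $\mathbb{I}^{\od^k}_h$. The only subtlety worth flagging is that the norm on the right of the conforming term must be the unbroken $\od^k$ norm, which is why I keep $\feta_h\in V^{\q}_h\Lambda^k\subset H\Lambda^k$ throughout and never let it wander into the broken space before $\mathbb{I}^{\od^k}_h$ is applied.
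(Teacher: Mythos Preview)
Your argument is correct and follows essentially the same approach as the paper: both proofs use Lemma~\ref{lem:capaw} to produce the candidate $\mathbb{I}^{\od^k}_h\feta_h\in\fW^{\rm def}_h\Lambda^k$ and then invoke Lemma~\ref{lem:globaloa} to control the interpolation error. The only cosmetic difference is the triangle-inequality split --- the paper inserts the conforming approximant $\fomega^{\q}$ and applies Lemma~\ref{lem:globaloa} to $\fomega^{\q}$, whereas you insert $\mathbb{I}^{\od^k}_h\fomega$ and use the derived stability bound $\|\mathbb{I}^{\od^k}_h\fzeta\|_{\od^k_h}\leqslant(1+C_{k,n})\|\fzeta\|_{\od^k_h}$; both routes yield the same estimate with the same constants up to harmless factors.
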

\begin{proof}
Denote by $\fomega^{\q}$ the optimal approximation of $\fomega\in H\Lambda^k$ in $V^{\q}_h$. Then, by Lemma \ref{lem:capaw},
\begin{multline}
\inf_{\fmu_h\in \fW^{\rm def}_h\Lambda^k}\|\fomega-\fmu_h\|_{\od^k_h}
\leqslant \|\fomega-\mathbb{I}^{\od^k}_h\fomega^{\q}\|_{\od^k_h}\leqslant \|\fomega-\fomega^{\q}\|_{\od^k}+\|\fomega^{\q}-\mathbb{I}^{\od^k}_h\fomega^{\q}\|_{\od^k_h}
\\
\leqslant \|\fomega-\fomega^{\q}\|_{\od^k}+C\inf_{\fmu_h\in \mathcal{P}^-_1\Lambda^k(\mathcal{G}_h)}\|\fomega^{\q}-\fmu_h\|_{\od^k_h}\ \mbox{(by\ Lemma\ \ref{lem:globaloa})}
\\
\leqslant \|\fomega-\fomega^{\q}\|_{\od^k}+C\inf_{\fmu_h\in \mathcal{P}^-_1\Lambda^k(\mathcal{G}_h)}\left(\|\fomega^{\q}-\fomega\|_{\od^k}+\|\fomega-\fmu_h\|_{\od^k_h}\right)
\\
\leqslant C\left(\inf_{\feta_h\in V^{\q}_h\Lambda^k}\|\fomega-\feta_h\|_{\od^k}+\inf_{\fmu_h\in \mathcal{P}^-_1\Lambda^k(\mathcal{G}_h)}\|\fomega-\fmu_h\|_{\od^k_h}\right).
\end{multline}
This proves \eqref{eq:oahlambda}. Similarly can \eqref{eq:oahlambda0} be proved. The proof is completed. 
\end{proof}

\subsection{Compatible scheme to $H\Lambda^k$ problem}

Here we consider a typical elliptic variational problem: given $\ff\in L^2\Lambda^k$, find $\fomega\in H\Lambda^k$, such that 
\begin{equation}\label{eq:evpd}
\langle\od^k \fomega,\od^k \fmu\rangle_{L^2\Lambda^{k+1}}+\langle \fomega,\fmu\rangle_{L^2\Lambda^k}=\langle \ff,\fmu\rangle_{L^2\Lambda^k},\ \ \forall\,\fmu\in H\Lambda^k.
\end{equation}
It follows by Lemma \ref{lem:a-book-6.5} that $\od^k\fomega \in H^*_0\Lambda^{k+1}$, and $\odelta_{k+1}\od^k\fomega+\fomega=\ff$.

We consider the finite element discretization of \eqref{eq:evpd}: to find $\fomega_h\in\fW^{\rm def}_h\Lambda^k$, such that 
\begin{equation}\label{eq:dispro}
\langle\od^k_h \fomega_h,\od^k_h \fmu_h\rangle_{L^2\Lambda^{k+1}}+\langle \fomega_h,\fmu_h\rangle_{L^2\Lambda^k}=\langle \ff,\fmu_h\rangle_{L^2\Lambda^k},\ \ \forall\,\fmu_h\in \fW^{\rm def}_h\Lambda^k.
\end{equation}

Immediately \eqref{eq:evpd} and \eqref{eq:dispro} are well-posed. 

\begin{theorem}\label{thm:basicestrn}
Let $\fomega$ and $\fomega_h$ be the solutions of \eqref{eq:evpd} and \eqref{eq:dispro}, respectively. 
\begin{equation}\label{eq:errest}
\|\fomega-\fomega_h\|_{\od^k_h}\leqslant C\left(\inf_{\fmu_h\in\mathcal{P}^-_1(\mathcal{G}_h)}\|\fomega-\fmu_h\|_{\od^k_h}+\inf_{\feta_h\in V^{\q}_h\Lambda^k}\|\fomega-\feta_h\|_{\od^k}+\inf_{\fpsi_h\in V^{\q,*}_{h0}\Lambda^{k+1}}\|\od^k\fomega-\fpsi_h\|_{\odelta_{k+1}}\right).
\end{equation}
\end{theorem}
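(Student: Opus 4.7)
The plan is to follow the standard Strang-type second-lemma framework for nonconforming discretizations, splitting the error into a best-approximation term and a consistency term.

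First, I would set up the discrete bilinear form $a_h(\fomega,\fmu) := \langle\od^k_h\fomega,\od^k_h\fmu\rangle_{L^2\Lambda^{k+1}} + \langle\fomega,\fmu\rangle_{L^2\Lambda^k}$ on $\fW^{\rm def}_h\Lambda^k + H\Lambda^k$, which is uniformly coercive and continuous in the broken norm $\|\cdot\|_{\od^k_h}$. Then the second Strang lemma gives
\begin{equation*}
\|\fomega-\fomega_h\|_{\od^k_h} \leqslant C \left( \inf_{\fzeta_h\in \fW^{\rm def}_h\Lambda^k}\|\fomega-\fzeta_h\|_{\od^k_h} + \sup_{\fmu_h\in \fW^{\rm def}_h\Lambda^k\setminus\{0\}} \frac{|E_h(\fmu_h)|}{\|\fmu_h\|_{\od^k_h}} \right),
\end{equation*}
where the consistency functional is $E_h(\fmu_h) = \langle\od^k\fomega,\od^k_h\fmu_h\rangle_{L^2\Lambda^{k+1}} + \langle\fomega,\fmu_h\rangle_{L^2\Lambda^k} - \langle\ff,\fmu_h\rangle_{L^2\Lambda^k}$. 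The first infimum is bounded directly by Lemma \ref{lem:oawdef}, yielding the first two terms on the right of \eqref{eq:errest}.

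The heart of the argument is the treatment of $E_h(\fmu_h)$. Since $\ff-\fomega = \odelta_{k+1}\od^k\fomega$ by Lemma \ref{lem:a-book-6.5} and the continuous equation, setting $\fsigma := \od^k\fomega \in H^*_0\Lambda^{k+1}$ gives
\begin{equation*}
E_h(\fmu_h) = \sum_{K\in\mathcal{G}_h}\bigl[\langle\fsigma,\od^k\fmu_h\rangle_{L^2\Lambda^{k+1}(K)} - \langle\odelta_{k+1}\fsigma,\fmu_h\rangle_{L^2\Lambda^k(K)}\bigr].
\end{equation*}
The defining orthogonality of $\fW^{\rm def}_h\Lambda^k$ states that for any $\fpsi_h\in V^{\q,*}_{h0}\Lambda^{k+1}$ the analogous sum with $\fsigma$ replaced by $\fpsi_h$ vanishes. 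Subtracting this zero quantity and integrating by parts cellwise yields
\begin{equation*}
E_h(\fmu_h) = \sum_{K\in\mathcal{G}_h}\bigl[\langle\fsigma-\fpsi_h,\od^k\fmu_h\rangle_{L^2\Lambda^{k+1}(K)} - \langle\odelta_{k+1}(\fsigma-\fpsi_h),\fmu_h\rangle_{L^2\Lambda^k(K)}\bigr],
\end{equation*}
so that Cauchy--Schwarz gives $|E_h(\fmu_h)|\leqslant \|\fsigma-\fpsi_h\|_{\odelta_{k+1}}\,\|\fmu_h\|_{\od^k_h}$. Taking the infimum over $\fpsi_h\in V^{\q,*}_{h0}\Lambda^{k+1}$ produces the third term in \eqref{eq:errest}.

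I expect the main obstacle to be the consistency step: one must identify the correct test-space $V^{\q,*}_{h0}\Lambda^{k+1}$ against which the orthogonality condition in the definition of $\fW^{\rm def}_h\Lambda^k$ can be invoked, and recognize that $\fsigma = \od^k\fomega$ lies in $H^*_0\Lambda^{k+1}$, precisely the space for which this orthogonality is pertinent; the cellwise codifferential relation $\langle \fsigma-\fpsi_h,\od^k\fmu_h\rangle_K - \langle\odelta_{k+1}(\fsigma-\fpsi_h),\fmu_h\rangle_K$ then has no boundary terms contributing globally, because the jump contributions across interfaces are absorbed by the orthogonality against $V^{\q,*}_{h0}\Lambda^{k+1}$. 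Once this is in place, the rest is just Strang plus Lemma \ref{lem:oawdef}, and the estimate \eqref{eq:errest} follows by combining the three bounds.
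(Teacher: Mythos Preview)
Your proposal is correct and follows essentially the same route as the paper: Strang's second lemma, then Lemma~\ref{lem:oawdef} for the approximation term, and the defining orthogonality of $\fW^{\rm def}_h\Lambda^k$ against $V^{\q,*}_{h0}\Lambda^{k+1}$ (with $\fsigma=\od^k\fomega\in H^*_0\Lambda^{k+1}$) plus Cauchy--Schwarz for the consistency term. The remark about ``integrating by parts cellwise'' and boundary terms is superfluous---the subtraction of $\fpsi_h$ is purely algebraic once the orthogonality is invoked---but it does no harm.
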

\begin{proof}
By Strang's lemma, 
\begin{equation}\label{eq:strang}
\|\fomega-\fomega_h\|_{\od^k_h}\leqslant 2\inf_{\fmu_h\in \fW^{\rm def}_h\Lambda^k}\|\fomega-\fmu_h\|_{\od^k_h}+\sup_{\fmu_h\in \fW^{\rm def}_h\Lambda^k}\frac{\langle\od^k\fomega,\od^k_h\fmu_h\rangle_{L^2\Lambda^{k+1}}-\langle\odelta_{k+1}\od^k\fomega,\fmu_h\rangle_{L^2\Lambda^k}}{\|\fmu_h\|_{\od^k_h}}.
\end{equation}
By the definition of $\fW^{\rm def}_h\Lambda^k$, for any $\fpsi_h\in V^{\q,*}_{h0}\Lambda^{k+1}$,
\begin{multline}\label{eq:constint}
\langle\od^k\fomega,\od^k_h\fmu_h\rangle_{L^2\Lambda^{k+1}}-\langle\odelta_{k+1}\od^k\fomega,\fmu_h\rangle_{L^2\Lambda^k}
\\
=\langle(\od^k\fomega-\fpsi_h),\od^k_h\fmu_h\rangle_{L^2\Lambda^{k+1}}-\langle\odelta_{k+1}(\od^k\fomega-\fpsi_h),\fmu_h\rangle_{L^2\Lambda^k}\leqslant \|\od^k\fomega-\fpsi_h\|_{\odelta_{k+1}}\|\fmu_h\|_{\od^k_h}.
\end{multline}
Then \eqref{eq:errest} follows by combining \eqref{eq:strang}, \eqref{eq:constint} and Lemma \ref{lem:oawdef}. The proof is completed. 
\end{proof}
Similar construction and result can be obtained for $H_0\Lambda^k$ spaces and problems. 

\begin{remark}
Concerning the practical implementation, we can use $\mathbb{I}^{\od^k}_h V^{\q}_h\Lambda^k$ at the place of $\fW^{\rm def}_h\Lambda^k$. Actually, according to the proof of Lemma \ref{lem:oawdef}, we can obtain 
\begin{equation}
\inf_{\fmu_h\in \R(\mathbb{I}^{\od^k}_h,V^{\q}_h\Lambda^k)}\|\fomega-\fmu_h\|_{\od^k_h}\leqslant C\left(\inf_{\feta_h\in V^{\q}_h\Lambda^k}\|\fomega-\feta_h\|_{\od^k}+\inf_{\fmu_h\in \mathcal{P}^-_1\Lambda^k(\mathcal{G}_h)}\|\fomega-\fmu_h\|_{\od^k_h}\right).
\end{equation}
The consistency error estimation holds simultaneously. In practice, given a set of basis functions of $V^{\q}_h\Lambda^k$, then its piecewise adjoint projection by $\mathbb{I}^{\od^k}_K$, cf. \eqref{eq:pwap}, even though not necessarily linearly independent, can work as a set of basis functions of $\mathbb{I}^{\od^k}_hV^{\q}_h\Lambda^k$.
\end{remark}

\begin{remark}
Similar construction and analysis can be given for $H_0\Lambda^k$ problems. 
\end{remark}

%
%
%
%
\subsection{Discrete complex and commutative diagram}

\begin{lemma}\label{lem:globales}
For $0\leqslant k\leqslant n-1$,
$\R(\od^k_h,\fW^{\rm def}_h\Lambda^k)\subset\N(\od^{k+1}_h,\fW^{\rm def}_h\Lambda^{k+1})$; $\R(\od^k_h,\fW^{\rm def}_{h0}\Lambda^k)\subset\N(\od^{k+1}_h,\fW^{\rm def}_{h0}\Lambda^{k+1})$.
\end{lemma}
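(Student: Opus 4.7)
The plan is to split the inclusion $\R(\od^k_h,\fW^{\rm def}_h\Lambda^k)\subset\N(\od^{k+1}_h,\fW^{\rm def}_h\Lambda^{k+1})$ into two verifications: (a) for $\fomega_h\in\fW^{\rm def}_h\Lambda^k$, the piecewise derivative $\od^k_h\fomega_h$ actually lies in $\fW^{\rm def}_h\Lambda^{k+1}$, and (b) $\od^{k+1}_h\od^k_h\fomega_h=0$. Part (b) is immediate cell by cell from $\od\circ\od=0$. The boundary-conforming statement will follow by exactly the same route with $V^{\q,*}_h\Lambda^{k+2}$ taking the place of $V^{\q,*}_{h0}\Lambda^{k+2}$.

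For (a), the local shape-function requirement is automatic: by \eqref{eq:localcouple}, $\od^k\mathcal{P}^-_1\Lambda^k(K)=\mathcal{P}_0\Lambda^{k+1}(K)\subset\mathcal{P}^-_1\Lambda^{k+1}(K)$. The substance is to verify the integral identity that defines $\fW^{\rm def}_h\Lambda^{k+1}$, namely
\begin{equation*}
\sum_{K\in\mathcal{G}_h}\langle\od^{k+1}\od^k\fomega_h,\fmu_h\rangle_{L^2\Lambda^{k+2}(K)}-\langle\od^k\fomega_h,\odelta_{k+2}\fmu_h\rangle_{L^2\Lambda^{k+1}(K)}=0
\end{equation*}
for every $\fmu_h\in V^{\q,*}_{h0}\Lambda^{k+2}$. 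The first summand vanishes termwise via $\od\circ\od=0$, so the task reduces to proving $\sum_K\langle\od^k\fomega_h,\odelta_{k+2}\fmu_h\rangle_{L^2\Lambda^{k+1}(K)}=0$.

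The key step, which I expect to be the only non-formal point in the argument, is to observe that $\fzeta_h:=\odelta_{k+2}\fmu_h$ belongs to $V^{\q,*}_{h0}\Lambda^{k+1}$. Writing $\fmu_h=\star\fpsi_h$ with $\fpsi_h\in V^{\q}_{h0}\Lambda^{n-k-2}$ and using the identity $\odelta_{k+2}=(-1)^{(k+2)n}\star\od^{n-k-2}\star$, the conforming $\q^-$ inclusion $\od V^{\q}_{h0}\Lambda^{n-k-2}\subset V^{\q}_{h0}\Lambda^{n-k-1}$ displayed at the end of Section \ref{sec:pre} yields $\fzeta_h=\pm\star\od^{n-k-2}\fpsi_h\in V^{\q,*}_{h0}\Lambda^{k+1}$. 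Moreover $\odelta_{k+1}\fzeta_h=\odelta_{k+1}\odelta_{k+2}\fmu_h=0$. Testing the defining identity of $\fomega_h\in\fW^{\rm def}_h\Lambda^k$ against this $\fzeta_h$ therefore collapses to exactly $\sum_K\langle\od^k\fomega_h,\fzeta_h\rangle_{L^2\Lambda^{k+1}(K)}=0$, which is the required identity. For $\fW^{\rm def}_{h0}$, one substitutes the companion inclusion $\od V^{\q}_h\Lambda^{n-k-2}\subset V^{\q}_h\Lambda^{n-k-1}$ from the same display to produce $\fzeta_h\in V^{\q,*}_h\Lambda^{k+1}$, and the rest of the argument is unchanged.
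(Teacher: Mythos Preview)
Your proof is correct and follows essentially the same route as the paper's: both reduce the membership $\od^k_h\fomega_h\in\fW^{\rm def}_h\Lambda^{k+1}$ to the observation that $\odelta_{k+2}\fmu_h\in V^{\q,*}_{h0}\Lambda^{k+1}$ with $\odelta_{k+1}\odelta_{k+2}\fmu_h=0$, and then test the defining identity of $\fW^{\rm def}_h\Lambda^k$ against this codifferential. The only cosmetic difference is that you unpack the inclusion $\R(\odelta_{k+2},V^{\q,*}_{h0}\Lambda^{k+2})\subset V^{\q,*}_{h0}\Lambda^{k+1}$ explicitly via the Hodge star and the $\q^-$ complex property from Section~\ref{sec:pre}, whereas the paper quotes it directly; and the paper singles out $k=n-1$ as a trivial case rather than letting $V^{\q,*}_{h0}\Lambda^{n+1}=\{0\}$ absorb it.
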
	
\begin{proof}
For $k=n-1$, the lemma holds as $\od^n_h\fomega=0$ with $\fomega\in\Lambda^n$. For $0\leqslant k\leqslant n-2$, given $\fmu_h\in \fW^{\rm def}_h\Lambda^k$, $\od^k_h\fmu_h\in\mathcal{P}_0\Lambda^{k+1}(\mathcal{G}_h)$. For any $\feta_h\in V^{\q,*}_{h0}\Lambda^{k+2}$,
\begin{multline*}
\underline{\langle\od^{k+1}_h(\od^k_h\fmu_h),\feta_h\rangle_{L^2\Lambda^{k+2}}}+\langle \od^k_h\fmu_h,\odelta_{k+2}\feta_h\rangle_{L^2\Lambda^{k+1}}
=\langle \od^k_h\fmu_h,\odelta_{k+2}\feta_h\rangle_{L^2\Lambda^{k+1}}
\\
= \langle \od^k_h\fmu_h,\odelta_{k+2}\feta_h\rangle_{L^2\Lambda^{k+1}}+\underline{\langle\fmu_h,\odelta_{k+1}(\odelta_{k+2}\feta_h)\rangle_{L^2\Lambda^k}}=0.
\end{multline*}
Here we use underline to label the vanishing terms, and have used the fact that $\R(\odelta_{k+2},V^{\q,*}_{h0}\Lambda^{k+2})\subset\N(\odelta_{k+1},V^{\q,*}_{h0}\Lambda^{k+1})$. It then follows that $\od^k_h\fmu_h\in \fW^{\rm def}_h\Lambda^{k+1}$, and $\od^k_h\fmu_h\in \N(\od^{k+1}_h,\fW^{\rm def}_h\Lambda^{k+1})$. 

Similarly, $\R(\od^k_h,\fW^{\rm def}_{h0}\Lambda^k)\subset\N(\od^{k+1}_h,\fW^{\rm def}_{h0}\Lambda^{k+1})$. The proof is completed. 
\end{proof}

We summarize Lemma \ref{lem:localcd}, Lemma \ref{lem:capaw} and Lemma \ref{lem:globales} into the theorem below. 
\begin{theorem} 
The discrete de Rham complexes and commutative diagrams below hold: 
\begin{equation}\label{eq:cdnb}
	\begin{array}{ccccccccccc}
	\mathbb{R} & ~~~\longrightarrow~~~ & V^{\q}_h\Lambda^0 & ~~~\xrightarrow{\od^0}~~~ & V^{\q}_h\Lambda^1 & ~~~\xrightarrow{\od^1}~~~ &  ... & ~~~\xrightarrow{\od^{n-1}}~~~ &  V^{\q}_h\Lambda^n & ~~~\xrightarrow{\od^n}~~~ &  0 \\
	& & \downarrow \mathbb{I}^{\od^0}_h & & \downarrow \mathbb{I}^{\od^1}_h & &&& \downarrow \mathbb{I}^{\od^n}_h  \\
	\mathbb{R} & \longrightarrow & \mathbf{W}^{\rm def}_h\Lambda^0 & \xrightarrow{\od^0_h} & \mathbf{W}^{\rm def}_h\Lambda^1 & \xrightarrow{\od^1_h} & ... & ~~~\xrightarrow{\od^{n-1}_h}~~~ & \mathbf{W}^{\rm def}_h\Lambda^n & ~~~\xrightarrow{\od^n_h}~~~ & 0
	\end{array};
\end{equation}

\begin{equation}\label{eq:cdwb}
	\begin{array}{ccccccccccccc}
 0 & ~~~\longrightarrow~~~ & V^{\q}_{h0}\Lambda^0 & ~~~\xrightarrow{\od^0}~~~ & V^{\q}_{h0}\Lambda^1 & ~~~\xrightarrow{\od^1}~~~ &  ... & ~~~\xrightarrow{\od^{n-1}}~~~ &  V^{\q}_{h0}\Lambda^n & ~~~\xrightarrow{\od^n}~~~ &  0 \\
	& & \downarrow \mathbb{I}^{\od^0}_h & & \downarrow \mathbb{I}^{\od^1}_h & &&& \downarrow \mathbb{I}^{\od^n}_h  \\
	0& \longrightarrow & \mathbf{W}^{\rm def}_{h0}\Lambda^0 & \xrightarrow{\od^0_h} & \mathbf{W}^{\rm def}_{h0}\Lambda^1 & \xrightarrow{\od^1_h} & ... & ~~~\xrightarrow{\od^{n-1}_h}~~~ & \mathbf{W}^{\rm def}_{h0}\Lambda^n & ~~~\xrightarrow{\od^n_h}~~~ & 0 
	\end{array}.
\end{equation}
\end{theorem}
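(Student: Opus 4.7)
The plan is to verify the four constituent claims of each commutative diagram separately — namely, that the top row is a complex, that the bottom row is a complex, that each vertical map lands in the indicated subspace, and that each square commutes — and then assemble them. The two diagrams \eqref{eq:cdnb} and \eqref{eq:cdwb} are structurally identical, so I would treat them in parallel, flagging only where the boundary conditions alter a testing space.

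For the top rows, the complex property $\R(\od^k,V^{\q}_h\Lambda^k)\subset \N(\od^{k+1},V^{\q}_h\Lambda^{k+1})$ and its $H_0$-conforming analogue are recorded at the end of the $\q^-_1\Lambda^k$ subsection of Section \ref{sec:pre} and follow from $\od^{k+1}\circ\od^k=0$ together with global $H\Lambda^k$- (resp.\ $H_0\Lambda^k$-) conformity of the tensor product spaces. For the bottom rows, Lemma \ref{lem:globales} provides $\R(\od^k_h,\fW^{\rm def}_h\Lambda^k)\subset\N(\od^{k+1}_h,\fW^{\rm def}_h\Lambda^{k+1})$ and the boundary-conforming version directly.

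The vertical arrows are well-defined with the claimed codomains by Lemma \ref{lem:capaw}, which gives precisely $\mathbb{I}^{\od^k}_h V^{\q}_h\Lambda^k \subset \fW^{\rm def}_h\Lambda^k$ and $\mathbb{I}^{\od^k}_h V^{\q}_{h0}\Lambda^k \subset \fW^{\rm def}_{h0}\Lambda^k$. Commutativity of each square reduces to the cellwise identity
\begin{equation*}
\mathbb{I}^{\od^{k+1}}_K (\od^k\fomega)|_K = \od^k\bigl(\mathbb{I}^{\od^k}_K (\fomega|_K)\bigr),\qquad K\in\mathcal{G}_h,
\end{equation*}
supplied by Lemma \ref{lem:localcd}. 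Assembling over all cells yields $\mathbb{I}^{\od^{k+1}}_h \od^k \fomega = \od^k_h \mathbb{I}^{\od^k}_h \fomega$ for every $\fomega$ in the top row, which is exactly the claimed commutativity.

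The substantive content of the theorem is therefore entirely packaged in Lemmas \ref{lem:localcd}, \ref{lem:capaw} and \ref{lem:globales}; the proof itself is a short paragraph citing these and observing that each piece fits into its slot. If anything is delicate, it is tracking the boundary conditions consistently across the vertical arrows in \eqref{eq:cdwb}, since the defining test space switches from $V^{\q,*}_{h0}\Lambda^{k+1}$ in $\fW^{\rm def}_h\Lambda^k$ to $V^{\q,*}_h\Lambda^{k+1}$ in $\fW^{\rm def}_{h0}\Lambda^k$; this asymmetry is, however, already absorbed into the statements of the cited lemmas, so no new argument is required.
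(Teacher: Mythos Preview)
Your proposal is correct and matches the paper's approach exactly: the paper presents this theorem as a direct summary of Lemmas \ref{lem:localcd}, \ref{lem:capaw}, and \ref{lem:globales}, with no additional argument. If anything, your write-up is more explicit than the paper's in spelling out which lemma supplies which ingredient (top-row complex, bottom-row complex, codomain of the vertical maps, and commutativity of the squares).
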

Evidently, provided any commutative diagram associated with $\left\{V^{\q}_h\Lambda^k\right\}$, there can be a combined commutative diagram to $\left\{\fW^{\rm def}_h\Lambda^k\right\}$.

\section{Concluding remarks}
\label{sec:conc}

In this paper,  finite element spaces $\fW^{\rm def}_h\Lambda^k$ are constructed for $H\Lambda^k$ on cubical meshes in $\rn$ equipped with commuting interpolators. This presents a confirming answer to the question if a compatible finite element space can be constructed by theoretically minimal local shape functions. Further, an explicit set of locally supported basis functions are given, which indicates that the scheme can be implemented by a same routine as that for a standard finite element scheme. We remark here that, no extra structure, such as composite grid or macro element, is assumed. Similar method and analysis can be constructed for $H^*\Lambda^k$ problems. 

For the capacity of the space $\fW^{\rm def}_h\Lambda^k$, it is proved that $\mathbb{I}^{\od^k}_hV^{\q}_h\subset \fW^{\rm def}_h\Lambda^k$. As explained before, the technique used here is different from Hu-Shi\cite{Hu.J;Shi.Z2005}. Though, it is still interesting to extend their trick to $H\Lambda^k$ problems, namely, to construct firstly finite element spaces of ``rotated $Q_1$ element" type and associated interpolators, and to check the orthogonality-type condition \eqref{eq:ap=int}. This will be studied in future endeavor. Further, it will be interesting to verify if the two spaces $\mathbb{I}^{\od^k}_hV^{\q}_h$ and $\fW^{\rm def}_h\Lambda^k$ are identical and to verify if $V^{\q}_h\Lambda^k$ is isomorphic to $\fW^{\rm def}_h\Lambda^k$. Boundary conditions may make an effect. This may be discussed in future.

Accompanied to the newly defined space by Whitney forms, a parallel presentation holds that the adjoint projection of $V^{\q}_1\Lambda^k$ to $\mathcal{P}^-_1\Lambda^k(\mathcal{G}_h)$ leads to a space whose continuity is figured out by the adjoint relation with $V^{\q,*}_h\Lambda^{k+1}$. We may treat this as intrinsic relation between $\q^-_1\Lambda^k$ and its star space. This kind of property can be investigated further in future. 

Now, the Crouziex-Raviart element space has been generalized to $H\Lambda^k$ problems on both simplicial meshes and cubical meshes. It will be interesting how to construct a Crouzeix-Raviart type space on a mixed mesh by both simplices and cubics. This will be studied in future.

\end{document}